\newtheorem{thm}{Theorem}[section]
\newtheorem{cor}[thm]{Corollary}
\newtheorem{prop}[thm]{Proposition}
\newtheorem{defn}[thm]{Definition}
\newtheorem{rem}[thm]{Remark}
\newtheorem{exam}[thm]{Example}
\begin{document}
\title {\bf{Operations on Fuzzy Incidence Graphs and Strong Incidence Domination}}
\author { \small Kavya. R. Nair \footnote{Corresponding author. Kavya. R. Nair, Department of Mathematics, National Institute of Technology, Calicut, Kerala, India. Email: kavyarnair@gmail.com} and M. S. Sunitha \footnote{sunitha@nitc.ac.in} \\ \small Department of Mathematics, National Institute of Technology, Calicut, Kerala, India-673601}
\date{}
\maketitle

\hrule
\begin{abstract}
  Fuzzy incidence graphs (FIG) model real world problems efficiently when there is an extra attribute of vertex- edge relationship. The article discusses the operations on Fuzzy incidence graphs. The join, Cartesian product, tensor product, and composition of FIGs are explored. The study is concentrated mainly on strong fuzzy incidence graphs (SFIG). The idea of strong incidence domination (SID) is used, and strong incidence domination number (SIDN) in operations is examined. Basic properties of FIGs obtained from the operations are studied. Bounds for the domination number of product of two SFIGs are determined for the Cartesian and tensor products. Study is conducted on FIGs with strong join and composition. Complete fuzzy incidence graphs (CFIGs) and FIGs with effective pairs are also considered in the study.

\end{abstract}

\textbf{Keywords:} Fuzzy incidence graphs, Weak fuzzy incidence cycle, Cartesian product, join, tensor product, composition, strong fuzzy incidence graphs.
\hrule
\section{Introduction}
With the introduction of fuzzy set theory by Zadeh \cite{zadeh} in 1965, the theory has evolved in various ways across numerous fields. The theory has wide range of applications in operation research, pattern recognition, decision theory, artificial intelligence etc. Graph-theoretical principles are frequently employed in the research and modeling of diverse applications in various fields. 
 However, in many circumstances, graph-theoretical notions are ambiguous and imprecise. In such cases it is suitable to use fuzzy set approaches to cope with ambiguity and uncertainty. This led to the introduction of fuzzy graph (FG) theory by Rosenfeld \cite{rosenfeld1975fuzzy} in 1975. The introduction of fuzzy incidence graphs (FIG) was motivated by a FG model with the extra attribute of vertices having some effect on the edges. Dinesh \cite{article} developed the term FIG in 2016 and studied some of its properties. The idea of connectivity and fuzzy end nodes is developed by Mordeson et al. \cite{book, mathew2019incidence}. 
It is often customary to study and perform operations on graph structures to obtain new structures from the existing ones. Since the 1950s, a number of graph products have been investigated. The operations on fuzzy graphs are discussed by Mordeson and Peng \cite{mordeson}. Parvathi et al. \cite{parvathy} examined several intuitionistic fuzzy graph (IFG) operations such as Cartesian product and composition. Sahoo and Pal \cite{sahoo} explored direct product, strong product and semistrong product in IFGs. Nazeer et al. studied the intuitionistic fuzzy incidence graphs (IFIG) products in \cite{nazeer2021}. The research on domination theory in graphs and fuzzy graphs expanded over the years due to its applications in numerous fields. The concept of domination emerged in the 1850s with the chessboard domination problem. Ore \cite{ore1962} and Berge \cite{berge1962} pioneered the study of domination in graphs in 1962. Significant works on fuzzy graph domination and domination in fuzzy graph products have been done in \cite{nagoor2006, manjusha2015strong, somasundaram1998domination, soma2, bhutani, ponnappan}. 
Nazeer et al. \cite{nazeer2021domination1, nazeer2021domination2} proposed FIG domination based on effective pairs and defined strong domination in FIG and their join. Afsharmanesh et al. \cite{afsharmanesh2021domination} recently proposed domination using valid edges in FIGs. Kavya and Sunitha \cite{kav} defined strong incidence domination using weight of strong pairs. 
\\
The study is motivated by the fact that the pairs in every FIG may be characterised as strong or non-strong. Hence the idea of SID developed can always be applied in any FIG. Furthermore, unlike other domination parameters that use the weight of vertices, SID uses the weight of strong pairs. As a result, the SID yields the lowest value. 
Also, graph operations are always beneficial for generating new structures from the already known structures. Hence the study on operations on FIG is conducted, and the idea of SID is applied to the obtained graphs.\\
The article studies some of the operations on FIGs. The study mainly deals with the join, Cartesian product, tensor product, and composition of SFIGs. Section 1 sums up the preliminaries. Section 2 discusses the join of FIGs and SID in the join. A characterisation for a FIG to be a SFIG is proved. It is illustrated with an example that the join of two SFIG need not be strong in general. A sufficient condition for the join to be strong is considered. Section 3 and 4 deal with the Cartesian and tensor products of SFIGs and FIGs with effective pairs respectively. Section 4 studies the composition of SFIGs. A sufficient condition for the composition to be strong is proved. Bound for the SIDN is obtained in each of the sections.

\section{Preliminaries}
The following definitions are taken from \cite{article, book, mathew2019incidence, kav, nazeer2021domination1, nazeer2021domination2, nazeer2021}.
 Throughout the article minimum and maximum operators are represented by $\wedge$ and $\vee$ respectively.
A triple $X= (V,E,I)$ such that $V$ is non-empty, $E\subseteq V\times V \text{ and } I\subseteq V\times E$ is an incidence graph (IG).\\
Elements in I are called incidence pairs or pairs and are of the form $(x,yz)$ where $x\in V$ and $e'=yz\in E$.\\
If $(w,xw), (x,xw), (y,yz) \text{ and } (z,yz)$ are in $I$, then the edges $wx$ and $yz$ are considered adjacent.\\
An incidence subgraph, $Y$ of $X$ is an IG having all its vertices, edges and pairs in $X$. 
\\
A sequence of vertices, edges and pairs starting at $x'$ and ending at $y'$, where $x',y'\in V\cup E$ is  called an incidence walk from $x'$ to $y'$. An incidence trail is an incidence walk with distinct pairs. If the vertices in an incidence walk are distinct , then it is called an incidence path.  
If each vertex in an IG is joined to every other vertex by a path, then the IG is connected. A component of an IG is a maximally connected incidence subgraph of the IG. \\
Let $X=(V,E)$ be a graph. Let $\varepsilon$ and $\rho$ be fuzzy subsets of $V$ and $E\subseteq V\times V$ respectively. Then $\mathscr X=(V,\varepsilon, \rho)$ is fuzzy graph(FG) of $X$ if $\rho(xy)\leq \varepsilon(x)\wedge\varepsilon(y)$ for all $x,y\in V$. Also, if $\eta(v',e')\leq \varepsilon(v')\wedge\rho(e')$, for all $v'\in V \text{ and } e'\in E$, then $\eta $ is the fuzzy incidence of $X$. And, $\mathscr{\tilde{X}}=(\varepsilon, \rho, \eta)$ is called fuzzy incidence graph (FIG) of $X$.\\
Here, $\varepsilon^*, \rho^* \text{ and } \eta^*$ are defined as $\varepsilon^*=\{x\in V: \varepsilon(x)>0\}$, $\rho^*=\{e'\in E: \rho(e')>0\}$, and $\eta^*=\{(x,xy)\in I: \eta(x,xy)>0\}$. If $|\varepsilon^*|=1$, then the FIG is called trivial.\\
Also, $\mathscr X=(V,\varepsilon,\rho)$ and $\mathscr X^*=(V,\varepsilon^*,\rho^*)$ are the underlying FG of $\mathscr{\tilde{X}}$ and underlying graph of $\mathscr X$ respectively.
Let $xy\in \rho^*$, if $(x,xy),(y,xy)\in \eta^* $, then $xy$ is an edge in $\tilde{\mathscr{X}}$. Pairs in FIG, $\mathscr{\tilde{X}}$ are elements of the form $(x,xy)$. If vertices $x$ and $y$ are joined by a path, then $x$ and $y$ are connected in $\tilde{\mathscr{X}}$. If each vertex is joined to every other vertex by 
a path, then $\tilde{\mathscr{X}}$ is connected. A fuzzy incidence subgraph 
$\tilde{\mathscr{Y}}=(\varphi,\xi,\zeta)$ of  $\tilde{\mathscr{X}}$ is such that 
$\varphi\subseteq \varepsilon, \xi\subseteq \rho, \text{ and } \zeta\subseteq \eta$ and  $\tilde{\mathscr{Y}}$ is a fuzzy incidence spanning subgraph of $\mathscr{\tilde{X}}$ if $\varphi= \varepsilon$. If $\varphi= \varepsilon, \xi= \rho, \text{ and } \zeta= \eta$ for elements in $\varphi^*, \xi^*, \zeta^*$ respectively, then $\tilde{\mathscr{Y}}$ is a subgraph of $\mathscr{\tilde{X}}$.\\
A FIG, $\tilde{\mathscr{X}}$ is complete fuzzy incidence graph (CFIG) if $\eta(x,xy)=\, \wedge \{\varepsilon(x), \rho(xy)\}$ for all $(x,xy)\in V\times E$ and $\rho(xy)=\varepsilon(x)\wedge\varepsilon(y) $ for all $(x,y)\in V\times V$. A pair $(x,xy)$ is an effective pair if $\eta(x,xy)=\, \wedge \{\varepsilon(x), \rho(xy)\}$. \\
In a FIG $\tilde{\mathscr{X}}$, a path from $s'$ to $t'$ where $s',t'\in \varepsilon^*\cup \rho^*$, is called an incidence path. The minimum of the $\eta $ values of pairs in an incidence path is the incidence strength of that path. Here, $\eta^{\infty}(x,yz) \text{ or } ICONN_{\tilde{\mathscr{X}}}(x,yz) $ is denoted as the incidence strength of path from $x$ to $yz$ of greatest incidence strength. \\
If $\tilde{\mathscr{X^*}}=(\varepsilon^*, \rho^*, \eta^*)$ is a cycle, then $\tilde{\mathscr{X}}$ is a cycle. In addition, $\tilde{\mathscr{X}}$ is a fuzzy cycle (FC), if there exists no unique edge $xy\in \rho^*$ with the least weight. A FIG, $\tilde{\mathscr{X}}=(\varepsilon, \rho, \eta)$ is a fuzzy incidence cycle (FIC) if $\tilde{\mathscr{X}}$ is a FC and there is no unique $(x,xy)\in \eta^*$ with the least weight. \\
Let $\tilde{\mathscr{X}}=(\varepsilon, \rho, \eta)$ be a FIG, $\eta^{\prime \infty}(x,yz)$ is the greatest incidence strength among the incidence strength of all paths from $x$ to $yz$ in $\tilde{\mathscr{G}}\setminus (x,yz)$. If $\eta(x,xy)>\eta^{\prime \infty}(x,xy)$, then the pair $(x,xy)$ is $\alpha-$ strong. Pair $(x,xy)$ is $\beta-$ strong if $\eta(x,xy)=\eta^{\prime \infty}(x,xy)$, and is a $\delta-$ pair if $\eta(x,xy)<\eta^{\prime \infty}(x,xy)$. A strong pair is $\alpha-$ strong or $\beta-$ strong pair. Vertex $x$ and edge $xy$ are strong fuzzy incidence neighbors if $(x,xy)$ is strong. A path with strong pairs is called strong incidence path (SIP). A FIG with only strong pairs is called a strong fuzzy incidence graph (SFIG). \\
If both $(x,xy)$ and $(y,xy)$ are strong, then $x$ is called strong incidence neighbor (SIN) of $y$. Also, $N_{IS}(x)$ is the strong incidence neighborhood of $x$, which is the set of all SINs of $x$. If $x=y$ or $x$ is a SIN of $y$, then $x$ dominates $y$. Isolated vertex $x$ is such that $N_{IS}(x)=\phi$. A set $\tilde{\mathscr D}\subseteq V$ in $\tilde{\mathscr{X}}$ is a strong incidence dominating set (SIDS), if for any $x\in V-\tilde{\mathscr D}$, $\exists$ some $y\in \tilde{\mathscr D}$ such that, $x$ is a SIN of $y$. \\
Here, $ W(\tilde{\mathscr D})$ is the weight of SIDS, $\tilde{\mathscr D}$,  defined as 
$$ W(\tilde{\mathscr D})=\sum_{x\in \tilde{\mathscr D}} \eta(x,xy) $$\\
where $\eta(x,xy)$ is minimum weight of strong pairs at $x$ and $y\in N_{IS}(x)$. The strong incidence domination number (SIDN), denoted as $\gamma_{IS}(\tilde{\mathscr X})$ or $\gamma_{IS}$ is the minimum weight of the SIDSs in the FIG, $\tilde{\mathscr{X}}$. A minimum SIDS is a SIDS with minimum weight.
\\
\section{Strong incidence domination in Join of Fuzzy Incidence Graph}
This section studies the SID in join of FIGs. The section begins with the definition of a WFIC. Theorem \ref{4} is a characterisation for a FIG to be strong. It is established that the join of two SFIG need not be strong. Proposition \ref{8} proves a necessary condition for the join of two FIGs to be strong. A sufficient condition for the join of two FIGs to be strong is also proved in Theorem \ref{10}. Theorem \ref{13} and its corollaries discuss the SIDN and SIDS in the join of FIGs.

\begin{rem}\label{1}
The definition of weak fuzzy incidence cycle (WFIC) is given in Definition \ref{2}. A WFIC is different from a FIC, since it is not necessary for the underlying FG of a WFIC to be a FC. An example of a WFIC is illustrated in Example \ref{3}.
\end{rem}
\begin{defn}\label{2}
Let $\tilde{\mathscr X}=(\varepsilon,\rho,\eta)$ be a FIG such that $(\varepsilon^*,\rho^*,\eta^*)$ is a cycle and there is no unique $(a,ab)\in \eta^*$ such that $\eta(a,ab)=\wedge\{\eta(c,cd)| (c,cd)\in \eta^*\}$. Then $\tilde{\mathscr X}$ is called a weak fuzzy incidence cycle (WFIC).
\end{defn}
\begin{center}
\begin{tikzpicture}[scale=0.5,inner sep=2.5pt]
\draw (0,0) node(1) [circle,fill,draw] {}
      (0,4) node(2) [circle,fill,draw] {}
      (5,0) node(3) [circle,fill,draw] {}
      (5,4) node(4) [circle,fill,draw] {}
      (2.5,6) node(5) [circle,fill,draw] {}
      ;
\draw[-] (1) to (2) to (5) to (4) to (3) to (1);
\draw (-0.6,4.1) node[above] {$y(1)$}
      (5.4,4.1) node[above] {$v(1)$}
      (5.2,-0.1) node[below] {$w(1)$}
      (-0.5,-0.1) node[below] {$x(1)$}
      (2.5,7) node[below] {$u(1)$}
      ;
\draw (-0.3,0.8) node[rotate=90] {\scriptsize{0.3}};
\draw (5.3,0.8) node[rotate=-90] {\scriptsize {0.2}};
\draw (0.8,-0.1) node[below] {\scriptsize{0.3}};
\draw (0.5,4.7) node[rotate=45] {\scriptsize{0.5}};
\draw (4.5,4.7) node[rotate=-45] {\scriptsize{0.1}};

\draw [->,dotted] (2.4,-0.2) -- (3.8,-0.2);
\draw [->,dotted] (3.8,0.2) -- (2.4,0.2);
\draw [->,dotted] (5.3,1.9) -- (5.3,3);
\draw [->,dotted] (4.8,3) -- (4.8,1.9);
\draw [->,dotted] (-0.3,1.9) -- (-0.3,3);
\draw [->,dotted] (0.3,3) -- (0.3,1.9);
\draw [->,dotted] (1,5.1) -- (1.9,5.8);
\draw [->,dotted] (1.9,5.2) -- (1,4.5);
\draw [->,dotted] (3.2,5.8) -- (4,5.1);
\draw [->,dotted] (4,4.5) -- (3.2,5.1);

\draw (3,-0.5) node[] {\scriptsize{0.3}};
\draw (3,0.5) node[] {\scriptsize{0.3}};
\draw (5.6,2.5) node[rotate=-90] {\scriptsize{0.2}};
\draw (4.5,2.5) node[rotate=90] {\scriptsize{0.2}};
\draw (-0.6,2.5) node[rotate=90] {\scriptsize{0.3}};
\draw (0.6,2.5) node[rotate=-90] {\scriptsize{0.3}};
\draw (1.4,5.8) node[rotate=45] {\scriptsize{0.5}};
\draw (1.5,4.5) node[rotate=45] {\scriptsize{0.5}};
\draw (3.6,5.8) node[rotate=-45] {\scriptsize{0.1}};
\draw (3.6,4.5) node[rotate=-45] {\scriptsize{0.1}};
\end{tikzpicture}
\\
Fig. 1. Weak fuzzy incidence cycle  $\tilde{\mathscr{X}}$
\end{center}
\begin{exam}\label{3}
The FIG, $\tilde{\mathscr{X}}$ in Fig. 1. is an example of WFIC, but $\tilde{\mathscr{X}}$ is not a FIC, since the underlying FG is not a FC.   
\end{exam}
\begin{thm}\label{4}
A FIG, $\tilde{\mathscr X}$ is a SFIG iff every cycle in $\tilde{\mathscr X}$ is a WFIC.
\end{thm}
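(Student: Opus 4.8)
The plan is to reduce the statement to a clean equivalence about $\delta$-pairs. By definition $\tilde{\mathscr X}$ is a SFIG precisely when it contains no $\delta$-pair, and a cycle $C$ fails to be a WFIC precisely when the minimum of the values $\eta(c,cd)$ over the pairs $(c,cd)$ of $C$ is attained by a \emph{unique} pair of $C$. Thus it suffices to prove the equivalence: $\tilde{\mathscr X}$ has a $\delta$-pair if and only if some cycle of $\tilde{\mathscr X}$ has a unique weakest incidence pair. The whole argument rests on translating the inequality $\eta(a,ab)<\eta^{\prime\infty}(a,ab)$ into the existence of a cycle through $(a,ab)$ on which $(a,ab)$ is the strict minimum, and back.

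For the forward implication I would assume $\tilde{\mathscr X}$ is a SFIG and take an arbitrary cycle $C$. Arguing by contradiction, suppose $C$ is not a WFIC, so there is a unique pair $(a,ab)$ of $C$ realizing the minimum $\eta$-value of $C$; in particular every other pair of $C$, including $(b,ab)$, has strictly larger $\eta$-value. Deleting the pair $(a,ab)$ and traversing the remaining arc of $C$ from the vertex $a$ the other way round to $b$, and then through the pair $(b,ab)$ to the edge $ab$, yields an incidence path from $a$ to $ab$ in $\tilde{\mathscr X}\setminus(a,ab)$; its incidence strength is the minimum of the $\eta$-values of the pairs it uses, all of which exceed $\eta(a,ab)$. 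Hence $\eta^{\prime\infty}(a,ab)>\eta(a,ab)$, so $(a,ab)$ is a $\delta$-pair, contradicting that $\tilde{\mathscr X}$ is strong. Therefore every cycle is a WFIC.

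For the converse I would assume every cycle is a WFIC and suppose, for contradiction, that some pair $(a,ab)$ is a $\delta$-pair, i.e. $\eta(a,ab)<\eta^{\prime\infty}(a,ab)$. Choose an incidence path $P$ from $a$ to the edge $ab$ in $\tilde{\mathscr X}\setminus(a,ab)$ of maximum incidence strength $\eta^{\prime\infty}(a,ab)>\eta(a,ab)$; since the pair $(a,ab)$ is deleted, $P$ must reach the edge $ab$ through the pair $(b,ab)$, so $P$ determines a simple vertex path from $a$ to $b$ that does not use the edge $ab$. Closing this path with the edge $ab$ and the two pairs $(a,ab),(b,ab)$ produces a cycle $C$ whose incidence pairs are exactly those of $P$ together with $(a,ab)$. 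Every pair of $P$ has $\eta$-value at least the strength of $P$, hence strictly greater than $\eta(a,ab)$, so $(a,ab)$ is the \emph{unique} weakest pair of $C$, i.e. $C$ is not a WFIC --- the required contradiction. Consequently $\tilde{\mathscr X}$ has no $\delta$-pair and is a SFIG.

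The main obstacle I anticipate is the bookkeeping at the incidence-pair level rather than the edge level: one must verify that deleting a single pair really does leave the complementary arc of a cycle as a legitimate incidence path ending at the edge $ab$ through $(b,ab)$, and, in the converse, that the maximum-strength path can be taken simple so that its closure is a genuine cycle of length at least three. Here the deletion of $(a,ab)$ is exactly what prevents $P$ from traversing the edge $ab$ (a traversal would need both pairs of $ab$), which guarantees a true cycle rather than a degenerate one. Once these incidence-level adjacencies are handled, the weight comparisons are immediate from the uniqueness of the minimum and the definition of incidence strength.
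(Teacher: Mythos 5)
Your proof is correct, and while the direction ``SFIG $\Rightarrow$ every cycle is a WFIC'' coincides with the paper's (unique weakest pair of a cycle, complementary arc as a witness path of strictly larger strength, hence a $\delta$-pair), your other direction takes a genuinely different route. The paper proves ``every cycle is a WFIC $\Rightarrow$ SFIG'' directly, by a three-case analysis on an arbitrary pair $(a,ab)$: lying on no cycle, being a (non-unique) minimum in every cycle through it, or failing to be a minimum in some cycle through it. You instead argue by contraposition with a single uniform construction: from a $\delta$-pair, take a maximum-strength path $P$ from $a$ to $ab$ in $\tilde{\mathscr X}\setminus(a,ab)$, observe it must enter $ab$ through $(b,ab)$ and cannot traverse the edge $ab$ (one of its two pairs is deleted), and close $P$ into a cycle in which $(a,ab)$ is the unique weakest pair. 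This buys you something concrete: your closure argument, applied to an arbitrary path, is exactly what is needed to justify the bound $ICONN_{\tilde{\mathscr X}\setminus(a,ab)}(a,ab)\leq \eta(a,ab)$, whereas the paper's Case 3 infers this bound from the low strength of the \emph{one} path $\tilde{C}\setminus(a,ab)$, which by itself does not control the maximum over \emph{all} paths (and its Case 2 equality is similarly terse); your version fills that gap. The trade-off is that the paper's case split classifies each pair as $\alpha$- or $\beta$-strong along the way, which is mildly more informative, while yours is shorter and, commendably, makes explicit the incidence-level bookkeeping (both pairs needed to traverse an edge, simplicity of $P$, no parallel edges, cycle length at least three) that guarantees the closure is a genuine cycle.
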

\begin{proof}
Let $\tilde{\mathscr X}$ be a FIG such that every cycle in $\tilde{\mathscr X}$ is a WFIC.
Let $(a,ab)$ be a pair in $\tilde{\mathscr X}$.
Then there are 3 cases:\\
\textbf{Case 1}: $(a,ab)$ does not belongs to any cycle of $\tilde{\mathscr X}$.\\
Then $(a,ab)$ is a strong pair.\\
\textbf{Case 2}: In every cycle that contains $(a,ab)$, $(a,ab)$ is a pair with least weight.\\
Since every cycle in $\tilde{\mathscr X}$ is a WFIC this implies that $\eta(a,ab)=ICONN_{\tilde{\mathscr X}\setminus (a,ab)}(a,ab)$, i.e, $(a,ab)$ is a strong pair.\\
\textbf{Case 3}: There exists at least one cycle $\tilde{C}$ containing $(a,ab)$, in which $(a,ab)$ is not a pair with least weight.\\
Then $\eta(a,ab)$ is greater than the incidence strength of the path $\tilde{C}\setminus (a,ab)$, which implies that
$\eta(a,ab)\geq ICONN_{\tilde{\mathscr X}\setminus (a,ab)}(a,ab)$ i.e, $(a,ab)$ is a strong pair.\\
Since $(a,ab)$ is arbitrary, it implies that $\tilde{\mathscr X}$ is a SFIG.\\
Conversely, suppose that $\tilde{\mathscr X}$ is a SFIG.
Suppose that $\tilde{C}$ is a cycle that is not WFIC.
Then there exists a pair $(a,ab)$ in $\tilde{C}$ such that $(a,ab)$ is the unique weakest pair in $\tilde{C}$. The path $\tilde{C}\setminus (a,ab)$ has incidence strength greater than $\eta(a,ab)$. Hence 
$\eta(a,ab)<ICONN_{\tilde{\mathscr X}\setminus (a,ab)}(a,ab)$, i.e, $(a,ab)$ is a $\delta-$ pair, which contradicts the assumption. 
Hence the result.
\end{proof}
The definition of join of FIGs is taken from \textnormal{\cite{nazeer2021domination2}}.
\begin{defn}\label{5}\textnormal{\cite{nazeer2021domination2}}
Let $\tilde{\mathscr X_1}=(\varepsilon_1,\rho_1,\eta_1)$ and $\tilde{\mathscr X_2}=(\varepsilon_2,\rho_2,\eta_2)$ be two FIGs. Then the join of $\tilde{\mathscr X_1}$ and $\tilde{\mathscr X_2}$ denoted as $\tilde{\mathscr X_1}\oplus \tilde{\mathscr X_2}$ is the FIG, $\tilde{\mathscr X}=(\varepsilon,\rho,\eta)$ such that:
\scriptsize{
$$\varepsilon(a)=
\begin{cases}
\varepsilon_1(a)  & if\quad a\in \tilde{\mathscr X_1}\\
\varepsilon_2(a)  & if\quad a\in \tilde{\mathscr X_2}
\end{cases}
$$
$$\rho(ab)=
\begin{cases}
\rho_1(ab)  & if\quad a,b\in \tilde{\mathscr X_1}\\
\rho_2(ab)  & if\quad a,b\in \tilde{\mathscr X_2}\\
\varepsilon_1(a)\wedge \varepsilon_2(b)  & if\quad a\in \tilde{\mathscr X_1} \text{ and } b\in \tilde{\mathscr X_2}
\end{cases}
$$

$$\eta(a,ab)=
\begin{cases}
\eta_1(a,ab)  &if\quad (a,ab)\in \tilde{\mathscr X_1}\\
\eta_2(a,ab)  &if\quad (a,ab)\in \tilde{\mathscr X_2}\\
\varepsilon_1(a)\wedge \varepsilon_2(b)\wedge \eta_1(a,av_i)  &if \quad a\in \tilde{\mathscr X_1} \text{ and } b\in \tilde{\mathscr X_2}\\ &\qquad \text{ where } v_i\in \tilde{\mathscr X_1}\\
\varepsilon_1(b)\wedge \varepsilon_2(a)\wedge \eta_2(a,av_i)  &if \quad b\in \tilde{\mathscr X_1} \text{ and } a\in \tilde{\mathscr X_2} \\ &\qquad \text{ where } v_i\in \tilde{\mathscr X_2}
\end{cases}
$$
}
\end{defn}
\begin{rem}\label{6}
In general, the join of two SFIGs need not be strong. Example \ref{7} illustrates that join of two SFIGs need not be SFIG.
\end{rem}
\begin{figure}
    \centering
    \includegraphics[width=6cm, height=10cm]{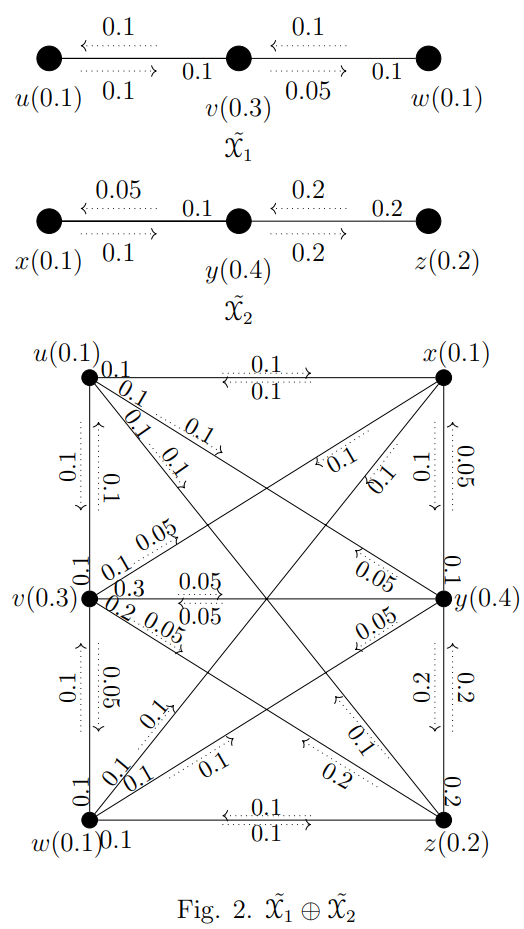}
\end{figure}

\begin{exam}\label{7}
For the FIGs in Fig. 2. $\tilde{\mathscr X_1}$ and $\tilde{\mathscr X_2}$ are SFIGs. But the join $\tilde{\mathscr X_1}\oplus \tilde{\mathscr X_2}$ is not a SFIG. The pair $(y,xy)$ is $\delta-$ pair since $ICONN_{\tilde{\mathscr X}-(y,xy)}=0.1>\eta(y,xy)=0.05$
Also, by Theorem \ref{4}, there exists a cycle $xyzux$ with unique weakest pair. Hence $\tilde{\mathscr X_1}\oplus \tilde{\mathscr X_2}$ is not strong.
\end{exam}
Next, Proposition \ref{8} gives a necessary condition for the join of two FIGs to be strong.

\begin{prop}\label{8}
If $\tilde{\mathscr X_1}$ and $\tilde{\mathscr X_2}$ are two FIGs such that the join $\tilde{\mathscr X}=\tilde{\mathscr X_1} \oplus \tilde{\mathscr X_2}$ is strong, then $\tilde{\mathscr X_1}$ and $\tilde{\mathscr X_2}$ are SFIGs.
\end{prop}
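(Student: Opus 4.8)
The plan is to argue by contradiction, leaning on the single structural fact that the join adjoins pairs only \emph{between} the two factors and never alters the internal weights of either factor. First I would assume the conclusion fails and, without loss of generality, that $\tilde{\mathscr X_1}$ is not a SFIG (the case of $\tilde{\mathscr X_2}$ being symmetric). By the definition of a SFIG, $\tilde{\mathscr X_1}$ then contains a pair that is not strong, i.e. a $\delta$-pair $(a,ab)$ satisfying
$$\eta_1(a,ab) < ICONN_{\tilde{\mathscr X_1}\setminus(a,ab)}(a,ab).$$

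The core step is to show that this pair survives as a $\delta$-pair in the join $\tilde{\mathscr X}$. By Definition \ref{5}, a pair lying in $\tilde{\mathscr X_1}$ keeps its weight, $\eta(a,ab)=\eta_1(a,ab)$, and likewise the vertex- and edge-weights internal to $\tilde{\mathscr X_1}$ are carried over unchanged; thus $\tilde{\mathscr X_1}\setminus(a,ab)$ embeds in $\tilde{\mathscr X}\setminus(a,ab)$ as a weight-preserving incidence subgraph. Consequently every incidence path from $a$ to $ab$ in $\tilde{\mathscr X_1}\setminus(a,ab)$ is still such a path in $\tilde{\mathscr X}\setminus(a,ab)$ with the same incidence strength, so maximising over the (possibly larger) family of paths available in the join can only raise the value:
$$ICONN_{\tilde{\mathscr X}\setminus(a,ab)}(a,ab)\ \geq\ ICONN_{\tilde{\mathscr X_1}\setminus(a,ab)}(a,ab)\ >\ \eta_1(a,ab)=\eta(a,ab).$$
Hence $\eta(a,ab) < ICONN_{\tilde{\mathscr X}\setminus(a,ab)}(a,ab)$, which is exactly the condition for $(a,ab)$ to be a $\delta$-pair in $\tilde{\mathscr X}$.

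This contradicts the hypothesis that $\tilde{\mathscr X}$ is strong, since a SFIG admits only strong pairs. Therefore neither factor can contain a $\delta$-pair, and both $\tilde{\mathscr X_1}$ and $\tilde{\mathscr X_2}$ are SFIGs. I expect the only delicate point to be the monotonicity inequality for $ICONN$: one must verify that adjoining the cross pairs joining $\tilde{\mathscr X_1}$ to $\tilde{\mathscr X_2}$ leaves every path already internal to $\tilde{\mathscr X_1}$ intact with its original strength, so that the factor's connectivity is only enhanced, never diminished. This is immediate from the weight-preservation clauses of Definition \ref{5}, but it is the hinge on which the whole necessary condition turns.
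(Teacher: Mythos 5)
Your proposal is correct and takes essentially the same route as the paper's proof: assume a factor has a $\delta$-pair $(a,ab)$, observe that the witnessing path of strength exceeding $\eta(a,ab)$ survives in the join, and invoke monotonicity of $ICONN$ to conclude $(a,ab)$ remains a $\delta$-pair in $\tilde{\mathscr X}$, contradicting strength. Your explicit check that Definition \ref{5} preserves the internal weights of each factor is simply a more careful statement of the step the paper leaves tacit.
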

\begin{proof}
Let $\tilde{\mathscr X_1}$ and $\tilde{\mathscr X_2}$ be two FIGs such that the join $\tilde{\mathscr X}=\tilde{\mathscr X_1} \oplus \tilde{\mathscr X_2}$ is strong. Suppose that $\tilde{\mathscr X_1}$ is not strong. Then there exists a pair $(a,ab)$ such that $\eta(a,ab)< ICONN_{\tilde{\mathscr X_1}\setminus (a,ab)}(a,ab)$, i.e,  $\eta(a,ab)$ is less than the incidence strength of a path say, $P$ in $\tilde{\mathscr X_1}\setminus (a,ab)$. Then the path $P$ exists in the join $\tilde{\mathscr X}$ also. Since the incidence strength between $a$ and $ab$ is the maximum of incidence strength of all paths between $a$ and $ab$, $\eta(a,ab)< ICONN_{\tilde{\mathscr X}\setminus (a,ab)}(a,ab)$. 
Hence $(a,ab)$ is not strong in join which is a contradiction. Therefore, $\tilde{\mathscr X_1}$ is SFIG. By the same argument it can be proved that $\tilde{\mathscr X_2}$ is a SFIG. 
\end{proof}
\begin{prop}\label{9}
Let $\tilde{\mathscr X}$ be a FIG such that for each vertex $x$ in $\tilde{\mathscr X}$, every pair incident at $x$ has equal weight, then $\tilde{\mathscr X}$ is a SFIG.
\end{prop}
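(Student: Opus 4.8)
The plan is to invoke the characterisation in Theorem \ref{4}, which reduces the claim to showing that every cycle in $\tilde{\mathscr X}$ is a WFIC. Thus I would fix an arbitrary cycle $\tilde{C}$ in $\tilde{\mathscr X}$, write its underlying vertex sequence as $v_1 v_2 \cdots v_n v_1$ with edges $v_i v_{i+1}$ (indices read modulo $n$), and examine the pairs of $\tilde{C}$. For each edge $v_i v_{i+1}$ of the cycle, the two incident pairs $(v_i, v_i v_{i+1})$ and $(v_{i+1}, v_i v_{i+1})$ lie in $\eta^*$ of $\tilde{C}$, and these are precisely the pairs making up the cycle.

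Next I would bring in the hypothesis. Each vertex $v_i$ of $\tilde{C}$ is incident, within the cycle, to exactly two edges, namely $v_{i-1} v_i$ and $v_i v_{i+1}$, so the two pairs $(v_i, v_{i-1} v_i)$ and $(v_i, v_i v_{i+1})$ both have first coordinate $v_i$. Since by assumption all pairs incident at $v_i$ carry the same weight, I may set $w_i := \eta(v_i, v_i v_{i+1}) = \eta(v_i, v_{i-1} v_i)$. Consequently every weight value $w_i$ is attained by at least two distinct pairs of $\tilde{C}$.

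With this in hand the WFIC property is immediate. Let $m = \wedge\{w_i : 1 \leq i \leq n\}$ be the least weight occurring among the pairs of $\tilde{C}$, and choose $k$ with $w_k = m$. Then the two distinct pairs $(v_k, v_{k-1} v_k)$ and $(v_k, v_k v_{k+1})$ both have weight $m$, so there is no unique pair of least weight in $\tilde{C}$; by Definition \ref{2}, $\tilde{C}$ is a WFIC. As $\tilde{C}$ was arbitrary (and if $\tilde{\mathscr X}$ contains no cycles the condition holds vacuously), every cycle in $\tilde{\mathscr X}$ is a WFIC, whence $\tilde{\mathscr X}$ is a SFIG by Theorem \ref{4}.

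I expect the only delicate point to be the bookkeeping in the second step: one must observe that the two pairs a cycle places at each of its vertices share the same first coordinate and therefore, by hypothesis, the same weight. Everything else is formal once this is recognised, and no estimate on the incidence strengths $\eta'^{\infty}$ is needed, since Theorem \ref{4} already packages that analysis.
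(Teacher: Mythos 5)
Your proof is correct, but it takes a genuinely different route from the paper's. You reduce the claim to the characterisation in Theorem \ref{4} and verify the WFIC condition cycle by cycle; the crux --- that a cycle places exactly two pairs with first coordinate $v_i$ at each of its vertices, namely $(v_i,v_{i-1}v_i)$ and $(v_i,v_iv_{i+1})$, which by hypothesis share a common weight $w_i$, so the minimum weight over the cycle's pairs is never attained uniquely --- is sound, and the acyclic case is correctly dispatched as vacuous. (One small point worth making explicit: these two pairs are distinct because a cycle has length at least three, so the edges $v_{k-1}v_k$ and $v_kv_{k+1}$ differ.) The paper instead argues directly at the level of incidence strengths: for an arbitrary pair $(a,ab)$, every incidence path from $a$ to $ab$ in $\tilde{\mathscr X}\setminus (a,ab)$ must begin with some pair $(a,ac)$, whose weight equals $\eta(a,ab)$ by hypothesis, whence $ICONN_{\tilde{\mathscr X}\setminus (a,ab)}(a,ab)\leq \eta(a,ab)$ and $(a,ab)$ is strong by definition. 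The paper's argument is shorter, self-contained (it does not invoke Theorem \ref{4}), and certifies each pair as $\alpha$- or $\beta$-strong directly; your route buys a purely combinatorial verification --- no estimate on $\eta^{\prime\infty}$ at all --- at the cost of leaning on the characterisation theorem, whose own proof carries out essentially the strength estimate you avoid. Both arguments are valid, and there is no circularity since Theorem \ref{4} precedes and does not use Proposition \ref{9}.
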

\begin{proof}
Suppose $\tilde{\mathscr X}$ is a FIG such that for each vertex $x$ in $\tilde{\mathscr X}$, every pair incident at $x$ has equal weight. Let $(a,ab)$ be an arbitrary pair in $\tilde{\mathscr X}$. Now, every FIP in $\tilde{\mathscr X}\setminus (a,ab)$ from $a$ to $ab$ begins with a pair incident at $a$. Since every pair incident at $a$ has equal weight, it implies that $\eta(a,ab)=\eta(a,ac)\geq ICONN_{\tilde{\mathscr X}\setminus (a,ab)}(a,ab)$.
Hence $(a,ab)$ is a strong pair. Since $(a,ab)$ is arbitrary, $\tilde{\mathscr X}$ is a SFIG.
\end{proof}

Theorem \ref{10} gives a sufficient condition for the join of two FIGs, $\tilde{\mathscr X_1}$ and $\tilde{\mathscr X_2}$ to be strong.

\begin{thm}\label{10}
Let $\tilde{\mathscr X_1}$ and $\tilde{\mathscr X_2}$ be FIGs such that for each vertex $u$ in $\tilde{\mathscr X_i}$, $i=1,2$, every pair incident at $u$ has the same weight. Then the join of $\tilde{\mathscr X_1}$ and $\tilde{\mathscr X_2}$ is a SFIG.
\end{thm}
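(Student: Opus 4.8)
The plan is to show directly that every incidence pair of the join $\tilde{\mathscr X}=\tilde{\mathscr X_1}\oplus\tilde{\mathscr X_2}$ is strong; equivalently, for each pair $(a,ab)$ I will verify $\eta(a,ab)\ge ICONN_{\tilde{\mathscr X}\setminus (a,ab)}(a,ab)$. Note first that Proposition \ref{9} cannot be applied verbatim to $\tilde{\mathscr X}$: in the join the cross pairs at a vertex $a\in\tilde{\mathscr X_1}$ carry weight $\varepsilon_1(a)\wedge\varepsilon_2(b)\wedge\eta_1(a,av_i)$, which may be strictly smaller than the weight $\eta_1(a,av_i)$ of the internal pairs at $a$, so the pairs incident at a single vertex of the join need not all be equal. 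A direct estimate is therefore needed.

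First I would record the quantity the hypothesis produces: for a vertex $u\in\tilde{\mathscr X_i}$ all pairs incident at $u$ inside $\tilde{\mathscr X_i}$ share a common weight, which I denote $m(u)$; moreover $m(u)\le\varepsilon_i(u)=\varepsilon(u)$ because $\eta_i(u,uv)\le\varepsilon_i(u)\wedge\rho_i(uv)$. The hypothesis is exactly what makes the cross-pair formula of Definition \ref{5} well defined, since $\eta_1(a,av_i)=m(a)$ independently of the chosen $v_i$. The key claim is then that \emph{every} pair incident at $u$ in the join has weight at most $m(u)$: internal pairs have weight exactly $m(u)$, while a cross pair at $u$ with other endpoint $d$ has weight $\varepsilon(u)\wedge\varepsilon(d)\wedge m(u)\le m(u)$.

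Next I would analyse an arbitrary path $P$ from $a$ to the edge $ab$ in $\tilde{\mathscr X}\setminus (a,ab)$ (if none exists the required inequality is trivial). Such a $P$ must begin with a pair incident at $a$ other than $(a,ab)$, and must terminate at the edge $ab$ through the pair $(b,ab)$, the only pair remaining on that edge after deletion. By the claim the first pair has weight $\le m(a)$ and the last pair has weight $\le m(b)\le\varepsilon(b)$, so the incidence strength of $P$, being the minimum of the weights along $P$, is at most $m(a)\wedge\varepsilon(b)$. It then remains to match this with $\eta(a,ab)$: if $(a,ab)$ is internal then $\eta(a,ab)=m(a)$ and the first-pair bound alone gives strength $\le m(a)=\eta(a,ab)$; if $(a,ab)$ is a cross pair then $\eta(a,ab)=\varepsilon(a)\wedge\varepsilon(b)\wedge m(a)=m(a)\wedge\varepsilon(b)$ (using $m(a)\le\varepsilon(a)$), which is precisely the bound just obtained. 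Taking the supremum over all $P$ yields $ICONN_{\tilde{\mathscr X}\setminus (a,ab)}(a,ab)\le\eta(a,ab)$, so $(a,ab)$ is $\alpha$- or $\beta$-strong; as $(a,ab)$ was arbitrary, $\tilde{\mathscr X}$ is a SFIG.

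I expect the main obstacle to be the cross pairs: for them the first pair of $P$ alone does not suffice, since its weight $m(a)$ can exceed $\eta(a,ab)=m(a)\wedge\varepsilon(b)$, and one must simultaneously exploit the terminal pair $(b,ab)$ together with the inequality $m(b)\le\varepsilon(b)$ to recover the factor $\varepsilon(b)$. A cleaner but essentially equivalent alternative would be to invoke Theorem \ref{4} and argue that no cycle of the join admits a unique weakest pair; the same bookkeeping on the two pairs a cycle places at each of $a$ and $b$ drives that argument, so I would prefer the direct path estimate above.
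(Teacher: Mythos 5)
Your proposal is correct and follows essentially the same route as the paper's proof: you bound every pair incident at a vertex of the join by the common internal weight, then use the initial pair at $a$ for internal pairs and the forced terminal pair $(b,ab)$ for cross pairs, exactly the paper's Case 1 / Case 2 split (your uniform bound $m(a)\wedge\varepsilon(b)$ just merges the two cases). Your preliminary remarks --- that Proposition \ref{9} does not apply verbatim to the join, and that the hypothesis is what makes the cross-pair formula $\varepsilon_1(a)\wedge\varepsilon_2(b)\wedge\eta_1(a,av_i)$ independent of $v_i$ --- are accurate observations the paper leaves implicit, but they do not change the argument.
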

\begin{proof}

Let $\tilde{\mathscr X}=\tilde{\mathscr X_1} \oplus \tilde{\mathscr X_2}$, and $a$ be a vertex in $\tilde{\mathscr X_1}$. Then, every pair in $\tilde{\mathscr X_1}$ incident at $a$ has equal weight say w. Now, for any vertex $b\in \tilde{\mathscr X_2}$ the pair $(a,ab)$ has weight 
\begin{equation*}
\begin{split}
\eta(a,ab) &= \varepsilon_1(a)\wedge \varepsilon_2(b)\wedge \eta(a,at_i),       t_i\in \tilde{\mathscr X_1}\\
&=\varepsilon_2(b)\wedge \text{w}
\end{split}
\end{equation*}
Therefore, every pair incident at $a$ in $\tilde{\mathscr X}$ will have weight less than or equal to w.\\
Hence for any pair $(a,ac)$, $a\in$ $\tilde{\mathscr X_1}$ there are two cases;\\
\textbf{Case 1}: $\eta(a,ac)=$ w.\\
Any incidence path from $a$ to $ac$ in $\tilde{\mathscr X}\setminus (a,ac)$ starts with pair incident at $a$. Since every pair at $a$ has weight either w or less than w, it follows that
$$ ICONN_{\tilde{\mathscr X}\setminus (a,ac)}(a,ac)\leq \text{w} = \eta(a,ac)$$
$\implies$ $(a,ac)$ is strong pair.\\
\textbf{Case 2}: $\eta(a,ac)=\varepsilon_2(c)$\\
By the definition of join of FIGs,  $\eta(c,ac)\leq \varepsilon_2(c)$.
Therefore as in Case 1, any incidence path from $a$ to $ac$ in $\tilde{\mathscr X}\setminus (a,ac)$ ends with pair $(c,ac)$. Hence, it follows that,
$$ ICONN_{\tilde{\mathscr X}\setminus (a,ac)}(a,ac)\leq \eta(c,ac) \leq \varepsilon_2(c)= \eta(a,ac)$$
$\implies$ $(a,ac)$ is strong pair.\\
Similarly it can be proved for the pairs incident at vertices in $\tilde{\mathscr X_2}$. Hence it implies that $\tilde{\mathscr X} $ is SFIG.
\end{proof}
\begin{rem}\label{11}
If two SFIGs have property as in Theorem \ref{10}, then their join is also a SFIG. 
The converse of Theorem \ref{10} need not be true as illustrated in Example \ref{12}.
\end{rem}
\begin{figure}[H]
    \centering
    \includegraphics[width=6cm,height=10cm]{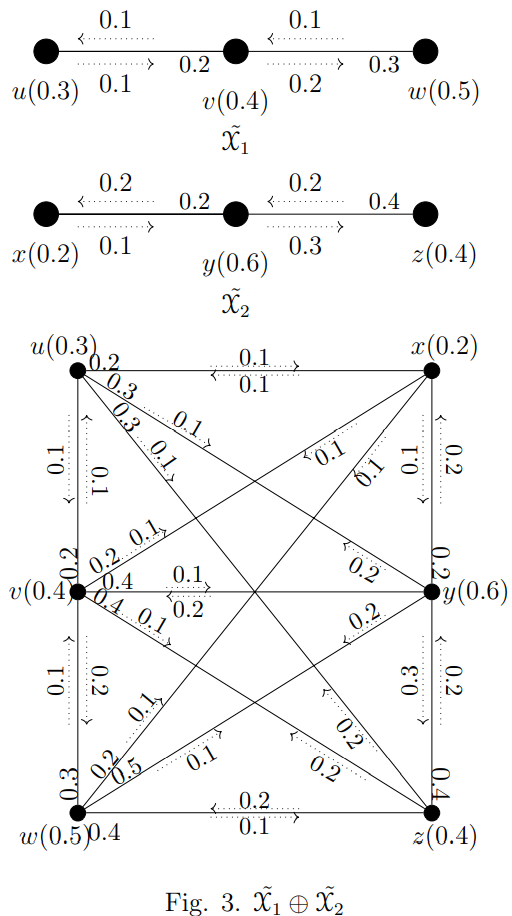}
\end{figure}
\begin{exam}\label{12}
The FIGs, $\tilde{\mathscr X_1}$, $\tilde{\mathscr X_2}$ and the join $\tilde{\mathscr X_1}\oplus \tilde{\mathscr X_2}$ in Fig. 3. are strong, but the pairs incident at $v$ in $\tilde{\mathscr X_1}$ have weights 0.1 and 0.2 and hence the converse of Theorem \ref{10} need not be true.
\end{exam}

\begin{thm}\label{13}
If $\tilde{\mathscr X_1}$ and $\tilde{\mathscr X_2}$ are FIGs such that $\tilde{\mathscr X}=\tilde{\mathscr X_1}\oplus \tilde{\mathscr X_2}$ is strong, then $$\gamma_{IS}= \wedge \{W(D_1),W(D_2),W(\{a,b\}) \text{ where } a\in \tilde{\mathscr X_1} \text{ and } b\in \tilde{\mathscr X_2}\}$$ where, $D_1$ and $D_2$ are minimum SIDS of $\tilde{\mathscr X_1}$ and $\tilde{\mathscr X_2}$ respectively. 
\end{thm}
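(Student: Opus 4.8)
The plan is to prove the identity by the two matching inequalities $\gamma_{IS}\le R$ and $\gamma_{IS}\ge R$, where $R$ denotes the right-hand side $\wedge\{W(D_1),W(D_2),W(\{a,b\})\}$, by first exhibiting the three advertised dominating sets and then showing that every minimum SIDS of the join has weight at least the smallest among them. Before either inequality I would record two structural consequences of $\tilde{\mathscr X}=\tilde{\mathscr X_1}\oplus\tilde{\mathscr X_2}$ being strong. First, since $\tilde{\mathscr X}$ is a SFIG, every cross pair $(x,xy)$ with $x$ and $y$ in opposite factors is strong; hence, by the definition of SIN, every vertex of $\tilde{\mathscr X_1}$ is a SIN of every vertex of $\tilde{\mathscr X_2}$ and conversely. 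Second, by Proposition \ref{8} both $\tilde{\mathscr X_1}$ and $\tilde{\mathscr X_2}$ are SFIGs, and since edges internal to a factor keep their $\eta$-values in the join while all pairs of $\tilde{\mathscr X}$ remain strong, the SIN relation restricted to a single factor coincides with the SIN relation of that factor.

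For the upper bound I would verify that each of the three sets is a SIDS of $\tilde{\mathscr X}$. Using the first observation, any two-element set $\{a,b\}$ with $a\in\tilde{\mathscr X_1}$ and $b\in\tilde{\mathscr X_2}$ dominates everything: $b$ is a SIN of every vertex of $\tilde{\mathscr X_1}\setminus\{a\}$ and $a$ is a SIN of every vertex of $\tilde{\mathscr X_2}\setminus\{b\}$. Using both observations, $D_1$ dominates $\tilde{\mathscr X_1}$ because it is a SIDS there and the SIN relation is preserved, and it dominates $\tilde{\mathscr X_2}$ through the cross pairs; the same reasoning applies to $D_2$. Consequently all three are SIDSs of the join and $\gamma_{IS}\le\wedge\{W(D_1),W(D_2),W(\{a,b\})\}=R$.

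For the lower bound I would take an arbitrary minimum SIDS $D$ of $\tilde{\mathscr X}$ and split it as $D=(D\cap\tilde{\mathscr X_1})\cup(D\cap\tilde{\mathscr X_2})$. If both parts are nonempty, I pick $a\in D\cap\tilde{\mathscr X_1}$ and $b\in D\cap\tilde{\mathscr X_2}$; since every vertex contributes a nonnegative term to the weight sum and the term contributed by a fixed vertex is intrinsic to that vertex in the join, discarding the remaining vertices of $D$ only lowers the weight, giving $W(D)\ge W(\{a,b\})\ge R$. If instead $D$ lies entirely inside one factor, say $D\subseteq\tilde{\mathscr X_1}$, then every vertex of $\tilde{\mathscr X_1}\setminus D$ must be dominated by a member of $D$ through an internal edge, so by the second observation $D$ is already a SIDS of $\tilde{\mathscr X_1}$, whence $W(D)\ge\gamma_{IS}(\tilde{\mathscr X_1})=W(D_1)\ge R$; the case $D\subseteq\tilde{\mathscr X_2}$ is symmetric. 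Combining the two inequalities yields $\gamma_{IS}=R$.

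The hard part will be the weight bookkeeping in the single-factor case. The weight of a vertex is the minimum $\eta$-value among its incident strong pairs, and in the join a vertex of $\tilde{\mathscr X_1}$ acquires extra incident cross pairs whose weights are capped by the $\varepsilon_2$-values of the opposite factor; a priori this could make the weight of $D$ computed in the join strictly smaller than its weight computed in $\tilde{\mathscr X_1}$, so that $W(D)\ge\gamma_{IS}(\tilde{\mathscr X_1})$ does not follow automatically. I expect the real work to be in confirming that the minimum incident strong-pair weight entering $W$ is the intrinsic factor weight, or else that any deficit produced by a light cross pair is already subsumed by the $W(\{a,b\})$ term in $R$; reconciling the weights taken in $\tilde{\mathscr X}$ with those taken in the factors is where the argument needs the most care.
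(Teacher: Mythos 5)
Your first half is, in substance, the paper's entire proof of Theorem \ref{13}: the paper argues that since every pair of the join is strong, each vertex of $\tilde{\mathscr X_1}$ dominates every vertex of $\tilde{\mathscr X_2}$ and conversely, concludes that $\{a,b\}$, $D_1$ and $D_2$ are all SIDSs of $\tilde{\mathscr X}$, and then stops with ``hence the result.'' In other words, the paper establishes only the upper bound $\gamma_{IS}\le \wedge\{W(D_1),W(D_2),W(\{a,b\})\}$ and never argues minimality over all SIDSs of the join. Your lower-bound skeleton (split a minimum SIDS $D$ into its traces on the two factors; in the mixed case use that the per-vertex weight terms are intrinsic and positive, so $W(D)\ge W(\{a,b\})$; in the one-sided case observe that $D\subseteq \tilde{\mathscr X_1}$ dominates its factor through internal edges, whose pairs are strong in $\tilde{\mathscr X_1}$ by Proposition \ref{8}) is therefore strictly more ambitious than what the paper does, and that skeleton is the right shape.

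However, the difficulty you flag in your last paragraph is a genuine, unresolved gap, and it is exactly where the one-sided case breaks. In the join, the weight term of $u\in \tilde{\mathscr X_1}$ is the minimum $\eta$ over all strong pairs at $u$, and the new cross pairs at $u$ have weight at most $\wedge_{b\in \tilde{\mathscr X_2}}\varepsilon_2(b)$; this can undercut every internal pair at $u$. Concretely, if $u$ is strongly adjacent to all of $\tilde{\mathscr X_1}$ with internal pair weights $0.9$ while some $b$ has $\varepsilon_2(b)=0.1$, then $D=\{u\}$ is a SIDS of the join with join-computed weight $0.1$, whereas $W(D_1)$ computed inside $\tilde{\mathscr X_1}$ is $0.9$ and every two-element set $\{a,b\}$ has weight the sum of two positive terms, typically exceeding $0.1$. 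So under the reading in which $W(D_1)$, $W(D_2)$ are factor weights, the inequality $W_{\tilde{\mathscr X}}(D)\ge W(D_1)$ you need is simply false, and indeed the stated identity itself fails; the theorem survives only if all three $W$'s are computed in the join, and even then your one-sided case is not finished, because $D_1$ is minimum with respect to factor weights, not join weights, so $W_{\tilde{\mathscr X}}(D)\ge W_{\tilde{\mathscr X}}(D_1)$ still does not follow. Closing this would require showing that whenever a cheap cross pair at some $u\in D$ drives the join weight down, a corresponding two-element set $\{u,b\}$ realizes a weight at most comparable, so the deficit is absorbed by the $W(\{a,b\})$ term; neither you nor the paper carries this out, and the paper omits the entire lower-bound direction silently.
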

\begin{proof}
Since $\tilde{\mathscr X}$ is strong, every pair in $\tilde{\mathscr X}$ is strong pair. Hence each vertex in $\tilde{\mathscr X_1}$ dominates every vertex in $\tilde{\mathscr X_2}$ and vice versa. Therefore $\{a,b\} $ where $a\in \tilde{\mathscr X_1}$ and $b\in \tilde{\mathscr X_2}$ dominates every vertex in $\tilde{\mathscr X}$, and hence is a SIDS. In a similar way if $D_1$ is a minimum SIDS of $\tilde{\mathscr X_1}$, it dominates every vertex of $\tilde{\mathscr X_1}$ and any vertex in $D_1$ dominates every vertex in $\tilde{\mathscr X_2}$ in the join. Hence $D_1$ is a SIDS of $\tilde{\mathscr X}$. By the same argument, $D_2$ is also a SIDS. Hence the result. 
\end{proof}
\begin{cor}\label{14}
Let $\tilde{\mathscr X_1}$ and $\tilde{\mathscr X_2}$ are FIGs such that $\tilde{\mathscr X}=\tilde{\mathscr X_1}\oplus \tilde{\mathscr X_2}$ is strong. If $D$ is the minimum SIDS of $\tilde{\mathscr X}$  then $|D|= min\{|D_1|,|D_2|,2\}$ where, $D_1$ and $D_2$ are minimum SIDS of $\tilde{\mathscr X_1}$ and $\tilde{\mathscr X_2}$ respectively.
\end{cor}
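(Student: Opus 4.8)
The plan is to build directly on Theorem \ref{13} and reduce the cardinality computation to a two-sided bound, where throughout $|D|$, $|D_1|$, $|D_2|$ are read as minimum cardinalities of SIDSs so that minimality can be invoked. Theorem \ref{13} already exhibits three SIDSs of the join $\tilde{\mathscr X}$: the minimum SIDS $D_1$ of $\tilde{\mathscr X_1}$, the minimum SIDS $D_2$ of $\tilde{\mathscr X_2}$, and any two-element set $\{a,b\}$ with $a\in\tilde{\mathscr X_1}$ and $b\in\tilde{\mathscr X_2}$, of cardinalities $|D_1|$, $|D_2|$, and $2$ respectively (the last since $a\ne b$). I would first record these as SIDSs of $\tilde{\mathscr X}$, which immediately yields the upper bound $|D|\le\min\{|D_1|,|D_2|,2\}$: a minimum SIDS cannot exceed any SIDS we can explicitly produce.

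For the lower bound I would take an arbitrary SIDS $D$ of $\tilde{\mathscr X}$ and split into cases according to how $D$ meets the two factors. If $D$ contains a vertex of $\tilde{\mathscr X_1}$ and a vertex of $\tilde{\mathscr X_2}$, then trivially $|D|\ge 2$. Otherwise $D$ lies entirely inside one factor, say $D\subseteq\tilde{\mathscr X_1}$, and the goal is to show that $D$ is then a SIDS of $\tilde{\mathscr X_1}$, whence $|D|\ge|D_1|$ by minimality of $D_1$; the symmetric case gives $|D|\ge|D_2|$. Combining the cases yields $|D|\ge\min\{|D_1|,|D_2|,2\}$, and together with the upper bound this proves the equality.

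The crux is the restriction step in the second case. The key observation is that Definition \ref{5} modifies neither edges nor pairs lying within a single factor: for $v,y\in\tilde{\mathscr X_1}$ one has $\rho(vy)=\rho_1(vy)$ and $\eta(v,vy)=\eta_1(v,vy)$, so the join creates new incidences only between $\tilde{\mathscr X_1}$ and $\tilde{\mathscr X_2}$. Since $\tilde{\mathscr X}$ is strong and, by Proposition \ref{8}, $\tilde{\mathscr X_1}$ is a SFIG, being a strong incidence neighbour for two vertices of $\tilde{\mathscr X_1}$ reduces in both graphs to adjacency through a common edge carrying both pairs. Hence whenever a vertex of $\tilde{\mathscr X_1}\setminus D$ is dominated by some $y\in D$ in $\tilde{\mathscr X}$, it is already dominated by $y$ in $\tilde{\mathscr X_1}$; as $D$ dominates all of $\tilde{\mathscr X_1}\setminus D$ in the join, $D$ is a SIDS of $\tilde{\mathscr X_1}$.

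I expect this restriction argument to be the main obstacle, since it is the only place where the internal structure of the factors, and not merely strongness of the join, is used; the bound $|D|\ge 2$ in the mixed case and the existence of the three explicit SIDSs are routine consequences of Theorem \ref{13}. The interpretive point worth flagging is the passage between the weight-based SIDS of the definitions and the purely cardinality-based statement here, which is why I would state the argument entirely in terms of minimum-cardinality dominating sets.
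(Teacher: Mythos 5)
Your proposal is correct under your stated reading, but it does genuinely more than the paper, which offers no proof of Corollary \ref{14} at all: the corollary is presented as an immediate consequence of Theorem \ref{13}, whose proof only \emph{exhibits} the three SIDSs $D_1$, $D_2$ and $\{a,b\}$ in the join. That exhibition is exactly your upper bound $|D|\le\min\{|D_1|,|D_2|,2\}$; the paper tacitly stops there. Your lower bound --- the case split on whether $D$ meets both factors, and the restriction lemma showing that a SIDS of the join lying entirely in one factor is already a SIDS of that factor --- is the part the paper omits, and your justification of it is sound: the join adds edges and pairs only between the factors, so within-factor domination in $\tilde{\mathscr X}$ uses a factor edge with both pairs present, and these pairs are strong in the factor either by monotonicity of $ICONN$ under passing to a subgraph or simply because $\tilde{\mathscr X_1}$ is a SFIG by Proposition \ref{8}. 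One point worth making explicit rather than merely flagging: your reinterpretation of $|D|$, $|D_1|$, $|D_2|$ as minimum \emph{cardinalities} is not cosmetic but necessary, since under the paper's literal definition (minimum SIDS means minimum \emph{weight}) the stated equality can fail --- a three-vertex set whose vertices each carry tiny strong-pair weights can have smaller weight than every two-element mixed set $\{a,b\}$, making the minimum-weight SIDS of the join have cardinality $3>\min\{|D_1|,|D_2|,2\}$. So your proof establishes the corollary in the only reading in which it is true, and fills in the converse inequality that the paper's ``proof by corollary'' leaves unaddressed.
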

\begin{cor}\label{15}
Let $\tilde{\mathscr X_1}$ and $\tilde{\mathscr X_2}$ be FIGs such that for each vertex $u$ in $\tilde{\mathscr X_i}$, $i=1,2$, every pair incident at $u$ has the same weight. Then, $\gamma_{IS}(\tilde{\mathscr X_1}\oplus \tilde{\mathscr X_2})= min\{W(D_1),W(D_2),W(\{a,b\}) \text{ where } a\in \tilde{\mathscr X_1} \text{ and } b\in \tilde{\mathscr X_2}\}$ where, $D_1$ and $D_2$ are minimum SIDS of $\tilde{\mathscr X_1}$ and $\tilde{\mathscr X_2}$ respectively.
\end{cor}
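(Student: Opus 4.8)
The plan is to obtain Corollary \ref{15} as an immediate consequence of Theorem \ref{10} and Theorem \ref{13}, chaining the two results. The hypothesis here---that at every vertex $u$ of $\tilde{\mathscr X_i}$ ($i=1,2$) all pairs incident at $u$ carry a common weight---is verbatim the hypothesis of Theorem \ref{10}. So the first step is simply to invoke Theorem \ref{10} to conclude that the join $\tilde{\mathscr X}=\tilde{\mathscr X_1}\oplus\tilde{\mathscr X_2}$ is a SFIG; that is, every pair of $\tilde{\mathscr X}$ is strong.

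Once strongness of the join is established, the second step is to apply Theorem \ref{13}, whose sole hypothesis is precisely that the join be strong. Theorem \ref{13} then yields $\gamma_{IS}(\tilde{\mathscr X})=\wedge\{W(D_1),W(D_2),W(\{a,b\})\}$ with $a\in\tilde{\mathscr X_1}$, $b\in\tilde{\mathscr X_2}$, and $D_1,D_2$ the minimum SIDSs of the respective factors. Since throughout the paper $\wedge$ denotes the minimum operator, this is exactly the asserted equality with $\min$ in place of $\wedge$, completing the argument.

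I expect no genuine obstacle, since the substantive content has been front-loaded into Theorems \ref{10} and \ref{13}: the former guarantees the domination theory of the join behaves as in the strong case, and the latter identifies the three candidate dominating sets (the two factor minimum SIDSs and any two-vertex set $\{a,b\}$ straddling the factors) and shows each is a SIDS of $\tilde{\mathscr X}$ using only strongness. The only point requiring attention is the clean matching of hypotheses between the corollary and Theorem \ref{10}, and the observation that $D_1$, $D_2$ and $\{a,b\}$ remain valid SIDSs in the join---both of which are already supplied by the cited results. Hence the proof reduces to citing these two theorems in sequence.
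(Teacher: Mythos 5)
Your proposal is correct and matches the paper's intent exactly: Corollary \ref{15} is stated without a separate proof precisely because it is the immediate chaining of Theorem \ref{10} (the common-pair-weight hypothesis forces the join $\tilde{\mathscr X_1}\oplus\tilde{\mathscr X_2}$ to be a SFIG) with Theorem \ref{13} (the SIDN formula for a strong join). Your attention to matching hypotheses is exactly the only step involved, so nothing is missing.
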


\section{Strong incidence domination in Cartesian Product of Fuzzy Incidence Graph}
The section discusses the Cartesian product of FIGs, SFIGs and FIGs with effective pairs. Theorem \ref{19} proves that the Cartesian product of two SFIGs is strong. Proposition \ref{21} discusses the Cartesian product in FIGs with effective pairs. Proposition \ref{27} establishes the Cartesian product and the SID in CFIGs. The SID in the Cartesian product of SFIGs is also discussed in Theorem \ref{28}.
\begin{rem}\label{16}
The operations on IFIGs is defined and studied in \cite{nazeer2021}. In this article definition \ref{17}, \ref{29} and \ref{35} are modified from the definitions in \cite{nazeer2021}.
\end{rem}

\begin{defn}\label{17}
Let $\tilde{\mathscr X_1}=(V_1,E_1,I_1,\varepsilon_1,\rho_1,\eta_1)$ and $\tilde{\mathscr X_2}=(V_2,E_2,I_2,\varepsilon_2,\rho_2,\eta_2)$ be two FIGs. Then the Cartesian product of $\tilde{\mathscr X_1}$ and $\tilde{\mathscr X_2}$ denoted as $\tilde{\mathscr X_1}\times \tilde{\mathscr X_2}$ is the FIG, $\tilde{\mathscr X}=(V,E,I,\varepsilon,\rho,\eta)$ such that: $V=V_1 \times V_2$, $E=\{(a_1,b_1)(a_2,b_2)| a_1=a_2,\, b_1b_2 \in E_2 \text{ \, or \, } a_1a_2\in E_1,\,b_1=b_2\}$ and $I=\{((a_1,b_1), (a_1,b_1)(a_2,b_2))| a_1=a_2, (b_1,b_1b_2)\in I_2, (b_2,b_1b_2)\in I_2 \text{ \,or\, } b_1=b_2, (a_1,a_1a_2)\in I_1, (a_2,a_1a_2)\in I_1\}$ and 
\scriptsize{
$$\varepsilon(a_1, a_2)= \varepsilon_1(a_1)\wedge\varepsilon_2(a_2) \quad \forall (a_1,a_2)\in V_1\times V_2
$$
$$\rho((a_1,b_1)(a_2,b_2))=
\begin{cases}
\varepsilon_1(a_1)\wedge\rho_2(b_1b_2)  & if\quad a_1=a_2 , b_1b_2\in E_2\\
\rho_1(a_1a_2)\wedge\varepsilon_2(b_1)  & if\quad b_1=b_2 , a_1a_2\in E_1
\end{cases} 
$$
$$\eta((a_1,b_1), (a_1,b_1)(a_2,b_2))=
\begin{cases}
\varepsilon_1(a_1)\wedge\eta_2(b_1,b_1b_2)  & if\quad a_1=a_2 ,\\ & \qquad (b_1,b_1b_2)\in I_2\\
\eta_1(a_1,a_1a_2)\wedge\varepsilon_2(b_1)  & if\quad b_1=b_2 ,\\ & \qquad (a_1,a_1a_2)\in I_1
\end{cases}
$$
}
\end{defn}
\begin{rem}\label{18}
Let $\tilde{\mathscr X_1}$ and $\tilde{\mathscr X_2}$ be two CFIGs with $m$ and $n$ vertices respectively. Let $a_1,a_2,...,a_m$ be the vertices of $\tilde{\mathscr X_1}$ and $b_1,b_2,...,b_n$ be the vertices of $\tilde{\mathscr X_2}$.
Consider the vertices of $\tilde{\mathscr X_1}\times\tilde{\mathscr X_2}$ in the form of rows and columns as follows,\\
$(a_1,b_1), (a_1,b_2), (a_1,b_3),...,(a_1,b_n)$\\
$(a_2,b_1), (a_2,b_2), (a_2,b_3),...,(a_2,b_n)$\\
$\vdots$\\
$(a_m,b_1), (a_m,b_2), (a_m,b_3),...,(a_m,b_n)$\\
Each vertex is adjacent to all vertices in the same row and column in which it belongs. Since $\tilde{\mathscr X_1}$ and $\tilde{\mathscr X_2}$ are CFIGs, they are SFIGs. Hence, each vertex dominates all vertices in the same row and column in which it belongs.
\end{rem}
\begin{thm}\label{19}
Let $\tilde{\mathscr X_1}$ and $\tilde{\mathscr X_2}$ be two SFIGs, then $\tilde{\mathscr X}=\tilde{\mathscr X_1}\times\tilde{\mathscr X_2}$ is a SFIG.
\end{thm}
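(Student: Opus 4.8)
The plan is to prove the statement directly from the definition: since a SFIG is exactly a FIG in which every pair is strong, it suffices to show that an arbitrary pair of $\tilde{\mathscr X}=\tilde{\mathscr X_1}\times\tilde{\mathscr X_2}$ is strong. By Definition \ref{17} every pair of $\tilde{\mathscr X}$ is of one of two types (a \emph{vertical} pair with fixed first coordinate, whose weight involves $\eta_2$, or a \emph{horizontal} pair with fixed second coordinate, whose weight involves $\eta_1$), and the two types are interchanged by swapping the roles of the factors. I would therefore treat only a vertical pair $p=((a,b_1),(a,b_1)(a,b_2))$, of weight $w=\varepsilon_1(a)\wedge\eta_2(b_1,b_1b_2)$, and obtain the horizontal case by a symmetric argument. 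To show $p$ is strong I must verify $w\ge ICONN_{\tilde{\mathscr X}\setminus p}((a,b_1),(a,b_1)(a,b_2))$, i.e.\ that every incidence path $P$ from $(a,b_1)$ to the edge $(a,b_1)(a,b_2)$ avoiding $p$ has strength at most $w$.

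First I would observe that the edge $(a,b_1)(a,b_2)$ carries only the two pairs $p$ and $p'=((a,b_2),(a,b_1)(a,b_2))$, so any such $P$ must reach the edge through $(a,b_2)$ via $p'$; thus $P=Q\cup\{p'\}$, where $Q$ runs from $(a,b_1)$ to $(a,b_2)$. Since $\eta(p')=\varepsilon_1(a)\wedge\eta_2(b_2,b_1b_2)\le\varepsilon_1(a)$ and $p'$ lies on $P$, the first bound $\text{strength}(P)\le\varepsilon_1(a)$ is immediate. The crux is the complementary bound $\text{strength}(P)\le\eta_2(b_1,b_1b_2)$, and for this I would project onto the second factor: reading off the second coordinates along $Q$ yields a walk $W_2$ in $\tilde{\mathscr X_2}$ from $b_1$ to $b_2$ (horizontal steps keep the coordinate fixed, while vertical steps move along an edge of $\tilde{\mathscr X_2}$, and $b_1\ne b_2$ forces at least one such step). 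Because every vertical pair of $\tilde{\mathscr X}$ has weight $\varepsilon_1(\cdot)\wedge\eta_2(\cdot)\le\eta_2(\cdot)$, the $\eta_2$-value of its projected pair, one gets $\text{strength}(Q)\le\text{strength}(W_2)$; appending the projection $(b_2,b_1b_2)$ of $p'$ gives a walk $W_2^{+}$ from $b_1$ to the edge $b_1b_2$ in $\tilde{\mathscr X_2}$ with $\text{strength}(P)\le\text{strength}(W_2^{+})$.

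Here is where I expect the one genuine obstacle, and it is resolved by a dichotomy on whether $W_2^{+}$ uses the pair $(b_1,b_1b_2)$. If it does not, then $W_2^{+}$ is a $b_1$-to-$b_1b_2$ walk in $\tilde{\mathscr X_2}\setminus(b_1,b_1b_2)$, so its strength is at most $ICONN_{\tilde{\mathscr X_2}\setminus(b_1,b_1b_2)}(b_1,b_1b_2)\le\eta_2(b_1,b_1b_2)$, the last inequality because $(b_1,b_1b_2)$ is strong in the SFIG $\tilde{\mathscr X_2}$. If it does use $(b_1,b_1b_2)$, then some vertical pair of $Q$ projects to $(b_1,b_1b_2)$, i.e.\ $Q$ uses $q=((a',b_1),(a',b_1)(a',b_2))$ with $a'\ne a$ (it cannot be $p$, which is deleted); then $\text{strength}(Q)\le\eta(q)=\varepsilon_1(a')\wedge\eta_2(b_1,b_1b_2)\le\eta_2(b_1,b_1b_2)$. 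Either way $\text{strength}(P)\le\eta_2(b_1,b_1b_2)$, and combined with $\text{strength}(P)\le\varepsilon_1(a)$ this yields $\text{strength}(P)\le w$. Hence $p$ is strong, and by the symmetric argument projecting onto the first factor every horizontal pair is strong, so $\tilde{\mathscr X}$ is a SFIG. One could alternatively argue through Theorem \ref{4} by checking that every cycle of $\tilde{\mathscr X}$ is a WFIC, but the mixed four-cycles make the minimum-weight bookkeeping cumbersome, so I prefer the direct route above.
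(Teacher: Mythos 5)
Your proposal is correct, and while it shares the paper's overall strategy (direct verification that an arbitrary pair is strong, treating only the ``vertical'' case $a_1=a_2$ and appealing to symmetry, then reducing path strengths in the product to data in the second factor), the tactics are genuinely different. The paper first splits on which term of $\varepsilon_1(a_1)\wedge\eta_2(b_1,b_1b_2)$ attains the minimum: its Case 1 is your forced-terminal-pair observation, and its Case 2 enumerates three subcases according to the shape of a competing path (staying in row $a_1$; using an edge $(a_i,b_1)(a_i,b_2)$ in another row; wandering through other columns), in the first and third building a cycle in $\tilde{\mathscr X_2}$ and invoking the fact that in a SFIG every cycle has more than one weakest pair (the WFIC characterisation of Theorem \ref{4}). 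You avoid both case splits: you prove the two bounds $\mathrm{strength}(P)\le\varepsilon_1(a)$ and $\mathrm{strength}(P)\le\eta_2(b_1,b_1b_2)$ simultaneously for every competing path, the latter by projecting onto the second factor and splitting on whether the projected walk uses $(b_1,b_1b_2)$ — if not, you invoke the strongness of $(b_1,b_1b_2)$ directly through $\eta_2(b_1,b_1b_2)\ge ICONN_{\tilde{\mathscr X_2}\setminus(b_1,b_1b_2)}(b_1,b_1b_2)$; if so, the offending pair $((a',b_1),(a',b_1)(a',b_2))$ with $a'\neq a$ itself bounds the strength. What your route buys: it is more uniform (no shape enumeration), it uses the definition of strong pair rather than the cycle machinery, and it explicitly addresses the walk-versus-path issue that the paper glosses over when it asserts that a path in $\tilde{\mathscr X}$ through arbitrary rows and columns yields a \emph{path} (rather than a walk) from $b_1$ to $b_2$ in $\tilde{\mathscr X_2}$; your dichotomy is exactly what is needed to make that step airtight. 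What the paper's route buys is consonance with the rest of the article — its cycle arguments reuse Theorem \ref{4} and mirror the proofs of Theorems \ref{33} and \ref{38} — at the cost of some bookkeeping. One small point worth making explicit in your write-up: extracting from a walk an incidence path of no smaller strength (so that the strength of $W_2^{+}$ is indeed bounded by the $ICONN$ value) deserves a sentence, though it is standard.
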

\begin{proof}
Let $\tilde{\mathscr X_1}$ and $\tilde{\mathscr X_2}$ be two SFIGs and $\tilde{\mathscr X}=\tilde{\mathscr X_1}\times\tilde{\mathscr X_2}$ be the Cartesian product of $\tilde{\mathscr X_1}$ and $\tilde{\mathscr X_2}$. Consider a pair say, $((a_1,b_1),(a_1,b_1)(a_2,b_2))$ in $\tilde{\mathscr X}$. Then, either $a_1=a_2, (b_1,b_1b_2), (b_2,b_1b_2)\in I_2$ or $(a_1,a_1a_2),(a_2,a_1a_2)\in I_1, b_1=b_2$.\\ 
Consider the case when $a_1=a_2, (b_1,b_1b_2),(b_2,b_1b_2)\in I_2$. Then, $\eta((a_1,b_1),(a_1,b_1)(a_2,b_2))= \varepsilon_1(a_1)\wedge\eta_2(b_1,b_1b_2)$.\\
\textbf{Case 1}: $\eta((a_1,b_1),(a_1,b_1)(a_2,b_2))= \varepsilon_1(a_1)$\\
Then, the pair weight
\begin{align*}
\eta((a_1,b_2),(a_1,b_2)(a_1,b_1))&= \varepsilon_1(a_1)\wedge\eta_2(b_2,b_1b_2)\\
&\leq \varepsilon_1(a_1)
\end{align*}
Any path from $(a_1,b_1)$ to $(a_1,b_1)(a_1,b_2)$ contains the pair $((a_1,b_2),(a_1,b_2)(a_1,b_1))$. Since the weight of \\$((a_1,b_2),(a_1,b_2)(a_1,b_1))$ is less than or equal to $\varepsilon_1(a_1)$, $((a_1,b_1),(a_1,b_1)(a_1,b_2))$ is a strong pair. \\
\textbf{Case 2}: $\eta((a_1,b_1),(a_1,b_1)(a_2,b_2))= \eta_2(b_1,b_1b_2)$. 
Now, consider paths from $(a_1,b_1)$ to $(a_1,b_2)$.\\
\textbf{Sub case 1}: Consider a path from $(a_1,b_1)$ to $(a_1,b_2)$ such that the vertices in the path are of the form $(a_1,v)$, $v\in V_2$. Corresponding to each such path, say, $P: (a_1,b_1), (a_1, v_1), (a_1, v_2),...,(a_1,v_n),(a_1,b_2)$, $v_1,v_2,...,v_n\in \tilde{\mathscr X_2}$ and $b_1v_1, v_1v_2,..., v_n b_2\in E_2$, there exists a path $P': b_1,v_1,v_2,...,v_n,b_2$ in $\tilde{\mathscr X_2}$. The path $P'$ together with $\{(b_2,b_1b_2), b_1b_2,(b_1,b_1b_2)\}$ forms a cycle, say $C$. Since $\tilde{\mathscr X_2}$ is a SFIG every cycle contains more than one weakest pair. Therefore, in $C$, there exists a pair with weight $\leq \eta_2(b_1,b_1b_2)$. Since $\varepsilon_1(a_1)\wedge\eta_2(b_1,b_1b_2)=\eta(b_1,b_1b_2)$, and the weight of each pair in path $P$ is defined as $\varepsilon_1(a_1)\wedge \eta_2(b_1,b_1v_1)$, 
$\varepsilon_1(a_1)\wedge \eta_2(v_1,b_1v_1)$,
$\varepsilon_1(a_1)\wedge \eta_2(v_1,v_1v_2)$,
$\varepsilon_1(a_1)\wedge \eta_2(v_2,v_1v_2)$
,..., $\varepsilon_1(a_1)\wedge \eta_2(v_n,v_nb_2)$,
$\varepsilon_1(a_1)\wedge \eta_2(b_2,v_nb_2)$, it implies that the path $P\cup \{((a_1,b_2),(a_1,b_1)(a_1,b_2)), (a_1,b_1)(a_1,b_2)\}$ has strength $\leq \eta_2(b_1,b_1b_2)$. \\
\textbf{Sub case 2}: Consider a path from $(a_1,b_1)$ to $(a_1,b_2)$ that contains vertices of the form $(a_n, b_1)$ and $(a_n,b_2)$, $a_n\neq a_1$.\\
In each such path there will be an edge of the form $(a_{i},b_1)(a_{i},b_2)$ with pair weights $\eta((a_{i},b_1),(a_{i},b_1)(a_{i},b_2))=\varepsilon_1(a_{i})\wedge\eta_2(b_1,b_1b_2)$ and $\eta((a_{i},b_2),(a_{i},b_1)(a_{i},b_2))=\varepsilon_1(a_{i})\wedge\eta_2(b_2,b_1b_2)$. Hence, the path contains pair with weight $\leq \eta_2(b_1,b_1b_2).$ Therefore, the strength of the path is $\leq\eta(b_1,b_1b_2).$\\
\textbf{Sub case 3}: Consider a path from $(a_1,b_1)$ to $(a_1,b_2)$ that contains vertices of the form $(a_n,b_m)$, $b_m\neq b_1, b_2$. Then corresponding to each such path, say, $P$ in $\tilde{\mathscr X}$, from $(a_1,b_1) $ to $(a_1,b_2)$, there exists a path in $\tilde{\mathscr X_2}$ from $b_1$ to $b_2$ other than the edge $b_1b_2$. This path in $\tilde{\mathscr X_2}$ together with $\{(b_2,b_1b_2),b_1b_2,(b_1,b_1b_1)\}$ forms a cycle, say, $C$  in $\tilde{\mathscr X_2}$. Since $\tilde{\mathscr X_2}$ is a SFIG, $C$ has more than one weakest pair. And hence $P\cup \{ (a_1,b_1)(a_1,b_2), ((a_1,b_2),(a_1,b_1)(a_1,b_2))\}$ in $\tilde{\mathscr X}$ has a pair of weight $\leq \eta_2(b_1,b_1b_2)$. Hence the pair $((a_1,b_1),(a_1,b_1)(a_1,b_2))$ is strong.
The case when $b_1=b_2, (a_1,a_1a_2), (a_2,a_1a_2)\in I_1$ can be proved similarly by taking $\tilde{\mathscr X_1}$ as SFIG. Therefore, if $\tilde{\mathscr X_1}$ and $\tilde{\mathscr X_2}$ are strong, then $\tilde{\mathscr X}$ is a SFIG.
\end{proof}
\begin{rem}\label{20}
By the Definition \ref{17}, of Cartesian product of FIGs, $\tilde{\mathscr X_1}$ and $\tilde{\mathscr X_2}$, a pair of non- adjacent vertices always exists in $\tilde{\mathscr X}=\tilde{\mathscr X_1}\times \tilde{\mathscr X_2}$. Therefore, the Cartesian product of two FIGs can never be CFIG.  
\end{rem}
\begin{prop}\label{21}
Let $\tilde{\mathscr X_1}$ and $\tilde{\mathscr X_2}$ be two FIGs with effective pairs, then $\tilde{\mathscr X}=\tilde{\mathscr X_1}\times\tilde{\mathscr X_2}$ is a FIG with effective pairs. 
\end{prop}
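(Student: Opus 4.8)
The plan is to verify directly, from Definition \ref{17}, that every pair of $\tilde{\mathscr X}=\tilde{\mathscr X_1}\times\tilde{\mathscr X_2}$ is effective, i.e.\ that its $\eta$-value coincides with the minimum of the $\varepsilon$-value of its incident vertex and the $\rho$-value of its edge. First I would fix an arbitrary pair $((a_1,b_1),(a_1,b_1)(a_2,b_2))$ in $\tilde{\mathscr X}$. By the description of the incidence set $I$ in Definition \ref{17}, exactly one of two situations occurs: either $a_1=a_2$ with $(b_1,b_1b_2)\in I_2$, or $b_1=b_2$ with $(a_1,a_1a_2)\in I_1$. Each situation is handled by substituting the defining formulas for $\varepsilon$, $\rho$, $\eta$ on the product and then invoking the effectiveness hypothesis on the appropriate factor.

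In the first situation the pair weight is $\eta=\varepsilon_1(a_1)\wedge\eta_2(b_1,b_1b_2)$, the incident vertex weight is $\varepsilon(a_1,b_1)=\varepsilon_1(a_1)\wedge\varepsilon_2(b_1)$, and the edge weight is $\rho=\varepsilon_1(a_1)\wedge\rho_2(b_1b_2)$. Since $\tilde{\mathscr X_2}$ has effective pairs, $\eta_2(b_1,b_1b_2)=\varepsilon_2(b_1)\wedge\rho_2(b_1b_2)$, so that $\eta=\varepsilon_1(a_1)\wedge\varepsilon_2(b_1)\wedge\rho_2(b_1b_2)$. Using idempotency and associativity of $\wedge$, this equals $[\varepsilon_1(a_1)\wedge\varepsilon_2(b_1)]\wedge[\varepsilon_1(a_1)\wedge\rho_2(b_1b_2)]=\varepsilon(a_1,b_1)\wedge\rho$, which is precisely the effectiveness identity.

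The second situation is completely symmetric: here $\eta=\eta_1(a_1,a_1a_2)\wedge\varepsilon_2(b_1)$, and applying the effectiveness of $\tilde{\mathscr X_1}$, namely $\eta_1(a_1,a_1a_2)=\varepsilon_1(a_1)\wedge\rho_1(a_1a_2)$, yields $\eta=\varepsilon_1(a_1)\wedge\rho_1(a_1a_2)\wedge\varepsilon_2(b_1)=[\varepsilon_1(a_1)\wedge\varepsilon_2(b_1)]\wedge[\rho_1(a_1a_2)\wedge\varepsilon_2(b_1)]=\varepsilon(a_1,b_1)\wedge\rho$. Since the pair was arbitrary, I conclude that every pair of $\tilde{\mathscr X}$ is effective, so $\tilde{\mathscr X}$ is a FIG with effective pairs.

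I do not expect any genuine obstacle: the argument is a direct computation rather than a structural one. The only point requiring care is the bookkeeping with $\wedge$—specifically, observing that the factor duplicated across the vertex and edge weights (namely $\varepsilon_1(a_1)$ in the first case and $\varepsilon_2(b_1)$ in the second) collapses by idempotency, so that after applying the factorwise effectiveness the two sides of the identity genuinely coincide.
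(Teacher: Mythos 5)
Your proposal is correct and matches the paper's own proof essentially step for step: the same case split according to whether $a_1=a_2$ or $b_1=b_2$, the same substitution of the factorwise effectiveness identity $\eta_i = \varepsilon_i \wedge \rho_i$, and the same duplication of $\varepsilon_1(a_1)$ (respectively $\varepsilon_2(b_1)$) via idempotency of $\wedge$ to recover $\varepsilon(a_1,b_1)\wedge\rho((a_1,b_1)(a_2,b_2))$. No gaps; nothing further is needed.
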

\begin{proof}
Let $\tilde{\mathscr X}=\tilde{\mathscr X_1}\times\tilde{\mathscr X_2}$ be the Cartesian product of two FIGs $\tilde{\mathscr X_1}$ and $\tilde{\mathscr X_2}$ with effective pairs. Consider a pair $((a_1,b_1),(a_1,b_1)(a_2,b_2))$ in $\tilde{\mathscr X}$, and the pair is effective if $\eta((a_1,b_1),(a_1,b_1)(a_2,b_2))=\varepsilon(a_1,b_1)\wedge\rho((a_1,b_1)(a_2,b_2))$.\\
Since $\tilde{\mathscr X_1}$ and $\tilde{\mathscr X_2}$ are FIGs with effective pairs and by Definition \ref{17}, the weight of $((a_1,b_1),(a_1,b_1)(a_2,b_2))$ is
{\scriptsize
\begin{align*}
\eta((a_1,b_1),(&a_1,b_1)(a_2,b_2))\\
&=
\begin{cases}
    \varepsilon_1(a_1)\wedge\eta_2(b_1,b_1b_2) & if \quad a_1=a_2,\quad (b_1,b_1b_2)\in I_2\\
    \eta_1(a_1,a_1a_2)\wedge\varepsilon_2(b_1) & if 
    \quad b_1=b_2, \quad (a_1,a_1a_2)\in I_1
\end{cases}\\
&=
\begin{cases}
  \varepsilon_1(a_1)\wedge\varepsilon_2(b_1)\wedge\rho_2(b_1b_2) & if \quad a_1=a_2,\\ &\qquad (b_1,b_1b_2)\in I_2\\ 
  \varepsilon_1(a_1)\wedge \rho_1(a_1a_2)\wedge \varepsilon_2(b_1) & if \quad b_1=b_2, \\ &\qquad (a_1,a_1a_2)\in I_1
\end{cases}\\
&=
\begin{cases}
  \varepsilon_1(a_1)\wedge\varepsilon_2(b_1)\wedge\varepsilon_1(a_1)\wedge\rho_2(b_1b_2) & if \quad a_1=a_2,\\ &\qquad (b_1,b_1b_2)\in I_2\\
  \varepsilon_1(a_1)\wedge \varepsilon_2(b_1)\wedge \rho_1(a_1a_2) \wedge \varepsilon_2(b_1) & if \quad b_1=b_2,\\ & \qquad (a_1,a_1a_2)\in I_1
\end{cases}\\
&= \varepsilon(a_1,b_1)\wedge\rho((a_1,b_1)(a_2,b_2))
\end{align*}
}
which implies that the pair $((a_1,b_1),(a_1,b_1)(a_2,b_2))$ is effective. Hence, $\tilde{\mathscr X}$ is a FIG with effective pairs.
\end{proof}
\begin{thm}\label{22}\cite{book}
Let $\tilde{\mathscr X}=(\varepsilon,\rho,\eta)$ be a FIG. If $\eta(x,xy)=\wedge\{\varepsilon(x),\rho(xy)\}$, then pair $(x,xy)$ is strong.
\end{thm}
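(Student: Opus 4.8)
The plan is to show directly that $(x,xy)$ cannot be a $\delta$-pair, i.e.\ that $\eta^{\prime\infty}(x,xy)\le\eta(x,xy)$; together with the definition of a strong pair this forces $(x,xy)$ to be $\alpha$- or $\beta$-strong. By hypothesis $\eta(x,xy)=\varepsilon(x)\wedge\rho(xy)$, so it suffices to bound the incidence strength of every incidence path from $x$ to $xy$ lying in $\tilde{\mathscr X}\setminus(x,xy)$ by $\varepsilon(x)\wedge\rho(xy)$.

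The key structural observation is to inspect the two ends of such a path $P$. Since $P$ is an incidence walk that starts at the vertex $x$, its first pair must be a pair $(x,xw)$ incident at $x$ with $xw\neq xy$; and since $P$ terminates at the \emph{edge} $xy$ while the pair $(x,xy)$ has been deleted, $P$ can reach $xy$ only through the one remaining pair incident on that edge, namely $(y,xy)$. Thus every path counted in $\eta^{\prime\infty}(x,xy)$ contains both a pair of the form $(x,xw)$ with $xw\neq xy$ and the pair $(y,xy)$.

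I would then apply the defining inequality $\eta(v',e')\le\varepsilon(v')\wedge\rho(e')$ of a FIG at each end. The first pair gives $\eta(x,xw)\le\varepsilon(x)\wedge\rho(xw)\le\varepsilon(x)$, and the last pair gives $\eta(y,xy)\le\varepsilon(y)\wedge\rho(xy)\le\rho(xy)$. Since the incidence strength of $P$ is the minimum of the $\eta$-values of its pairs, it is at most both of these quantities, hence at most $\varepsilon(x)\wedge\rho(xy)=\eta(x,xy)$. Taking the maximum over all such paths yields $\eta^{\prime\infty}(x,xy)\le\eta(x,xy)$, so $(x,xy)$ is strong. (If no path from $x$ to $xy$ survives the deletion, then $\eta^{\prime\infty}(x,xy)=0<\eta(x,xy)$ and the pair is vacuously $\alpha$-strong.)

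The step needing the most care is the structural claim at the terminal edge: one must argue that an incidence path ending at the edge $xy$ necessarily closes with a pair incident on $xy$, and that deleting $(x,xy)$ leaves only $(y,xy)$ available, so the final pair is forced. Once this is pinned down the rest is a one-line lattice estimate, and the simultaneous use of the first and last pairs---rather than either one alone---is exactly what produces the two factors $\varepsilon(x)$ and $\rho(xy)$ needed to reconstruct $\eta(x,xy)=\varepsilon(x)\wedge\rho(xy)$.
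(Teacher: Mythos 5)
The paper states this result without proof, importing it verbatim from \cite{book}, so there is no in-paper argument to compare against; judged on its own, your proof is correct and is essentially the standard one for this lemma. Your endpoint analysis is exactly the crux and is handled properly: after deleting $(x,xy)$, every incidence path from $x$ to $xy$ must open with some pair $(x,xw)$ with $xw\neq xy$ and close with the forced pair $(y,xy)$, and the two FIG inequalities $\eta(x,xw)\le\varepsilon(x)$ and $\eta(y,xy)\le\rho(xy)$ then bound the strength of every such path by $\varepsilon(x)\wedge\rho(xy)=\eta(x,xy)$, ruling out a $\delta$-pair (with the no-surviving-path case correctly disposed of as vacuous).
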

\begin{prop}\label{23}
Let $\tilde{\mathscr X_1}$ and $\tilde{\mathscr X_2}$ be two FIGs such that there exits an edge $a_1a_2\in E_1$ with pair weights $\eta(a_1,a_1a_2), \eta(a_2,a_1a_2)\geq\varepsilon_2(v)$ for some $v\in V_2$, then the pairs $((a_1,v),(a_1,v)(a_2,v))$ and \\ $((a_2,v),(a_1,v)(a_2,v))$ are effective pairs.
\end{prop}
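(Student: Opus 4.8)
The plan is to verify the definition of an effective pair directly for each of the two pairs, namely to check that $\eta((a_1,v),(a_1,v)(a_2,v)) = \varepsilon(a_1,v) \wedge \rho((a_1,v)(a_2,v))$, by evaluating both sides with Definition \ref{17} and then collapsing all the resulting minima to a single value using the hypothesis. Since $b_1=b_2=v$ and $a_1a_2\in E_1$, the edge $(a_1,v)(a_2,v)$ falls into the second case of each piecewise formula, so every quantity appearing below is explicitly given by Definition \ref{17}.

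First I would unwind the left-hand side. Definition \ref{17} gives $\eta((a_1,v),(a_1,v)(a_2,v)) = \eta_1(a_1,a_1a_2) \wedge \varepsilon_2(v)$, and the hypothesis $\eta_1(a_1,a_1a_2) \geq \varepsilon_2(v)$ immediately reduces this to $\varepsilon_2(v)$. Next I would turn to the right-hand side: Definition \ref{17} yields $\varepsilon(a_1,v) = \varepsilon_1(a_1) \wedge \varepsilon_2(v)$ and $\rho((a_1,v)(a_2,v)) = \rho_1(a_1a_2) \wedge \varepsilon_2(v)$, so their minimum is $\varepsilon_1(a_1) \wedge \varepsilon_2(v) \wedge \rho_1(a_1a_2)$.

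The one substantive step is to show that this three-way minimum is again $\varepsilon_2(v)$. Here I would invoke the defining inequality of a fuzzy incidence, $\eta_1(a_1,a_1a_2) \leq \varepsilon_1(a_1) \wedge \rho_1(a_1a_2)$; combined with the hypothesis $\eta_1(a_1,a_1a_2) \geq \varepsilon_2(v)$ this forces both $\varepsilon_1(a_1) \geq \varepsilon_2(v)$ and $\rho_1(a_1a_2) \geq \varepsilon_2(v)$, whence the minimum collapses to $\varepsilon_2(v)$. Thus both sides equal $\varepsilon_2(v)$, and the pair $((a_1,v),(a_1,v)(a_2,v))$ is effective.

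Finally, the pair $((a_2,v),(a_1,v)(a_2,v))$ is handled in exactly the same way, using the symmetric hypothesis $\eta_1(a_2,a_1a_2) \geq \varepsilon_2(v)$ in place of the one for $a_1$. I expect no genuine obstacle: the whole argument is a chain of $\wedge$-simplifications, and the only idea required is that the hypothesis, together with the built-in bound $\eta_1 \leq \varepsilon_1 \wedge \rho_1$, squeezes both $\varepsilon_1(a_1)$ and $\rho_1(a_1a_2)$ above $\varepsilon_2(v)$, so that $\varepsilon_2(v)$ is simultaneously the value of $\eta$ and of $\varepsilon \wedge \rho$ on the pair.
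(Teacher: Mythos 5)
Your proposal is correct and follows essentially the same route as the paper: both unwind Definition \ref{17} and use the incidence bound $\eta_1 \leq \varepsilon_1 \wedge \rho_1$ together with the hypothesis to collapse every minimum to $\varepsilon_2(v)$. If anything, you are slightly more careful than the paper, which writes the effectiveness condition as $\rho((a_1,v)(a_2,v)) \wedge \varepsilon_2(v)$ and leaves the reduction $\varepsilon(a_1,v) = \varepsilon_1(a_1) \wedge \varepsilon_2(v) = \varepsilon_2(v)$ implicit, whereas you justify it explicitly via $\varepsilon_1(a_1) \geq \eta_1(a_1,a_1a_2) \geq \varepsilon_2(v)$.
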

\begin{proof}
Suppose $\tilde{\mathscr X_1}$ and $\tilde{\mathscr X_2}$ are two FIGs such that there exits an edge $a_1a_2\in E_1$ with pair weights \\$\eta(a_1,a_1a_2), \eta(a_2,a_1a_2)\geq\varepsilon_2(v)$ for some $v\in V_2$. Since $\rho_1(a_1a_2)\geq\eta(a_1,a_1a_2)$ and $\rho_1(a_1a_2)\geq\eta(a_2,a_1a_2)$, the weight of edge $(a_1,v)(a_2,v)$ in $E$ is $\rho((a_1,v)(a_2,v))=\rho_1(a_1a_2)\wedge\varepsilon_2(v)=\varepsilon_2(v)$.\\ 
Therefore, $\eta((a_1,v),(a_1,v)(a_2,v))=\eta_1(a_1,a_1a_2)\wedge\varepsilon_2(v)=\varepsilon_2(v)=\rho((a_1,v)(a_2,v))\wedge\varepsilon_2(v)$.
Similarly, \\$\eta((a_2,v),(a_1,v)(a_2,v))=\eta_1(a_2,a_1a_2)\wedge\varepsilon_2(v)=\varepsilon_2(v)=\rho((a_1,v)(a_2,v))\wedge\varepsilon_2(v)$. Hence, the pairs \\$((a_1,v),(a_1,v)(a_2,v))$ and $((a_2,v),(a_1,v)(a_2,v))$ are effective pairs. 
\end{proof}
Corollary \ref{24} follows from Theorem \ref{22} and Proposition \ref{23}.
\begin{cor}\label{24}
Let $\tilde{\mathscr X_1}$ and $\tilde{\mathscr X_2}$ be two FIGs such that there exits an edge $a_1a_2\in E_1$ with pair weights $\eta(a_1,a_1a_2), \eta(a_2,a_1a_2)\geq\varepsilon_2(v)$ for some $v\in V_2$, then the pairs $((a_1,v),(a_1,v)(a_2,v))$ and \\ $((a_2,v),(a_1,v)(a_2,v))$ are strong pairs.
\end{cor}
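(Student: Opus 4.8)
The plan is to obtain this corollary as an immediate consequence of the two results cited just before it, namely Proposition \ref{23} and Theorem \ref{22}, without redoing any of the weight computations. The observation driving the whole argument is that ``effective'' and ``strong'' are linked by Theorem \ref{22}: that theorem asserts precisely that whenever a pair $(x,xy)$ satisfies $\eta(x,xy)=\wedge\{\varepsilon(x),\rho(xy)\}$ — which by the preliminary definitions is exactly what it means for $(x,xy)$ to be an effective pair — the pair is automatically strong. So it suffices to verify that the two pairs in the statement are effective, and that verification is already supplied by Proposition \ref{23}.

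Concretely, I would proceed in two short steps. First, I would invoke Proposition \ref{23}: since the hypotheses here are word-for-word identical to those of Proposition \ref{23} (there exists an edge $a_1a_2\in E_1$ with $\eta(a_1,a_1a_2),\,\eta(a_2,a_1a_2)\geq \varepsilon_2(v)$ for some $v\in V_2$), the proposition directly yields that $((a_1,v),(a_1,v)(a_2,v))$ and $((a_2,v),(a_1,v)(a_2,v))$ are effective pairs in $\tilde{\mathscr X}=\tilde{\mathscr X_1}\times\tilde{\mathscr X_2}$. That is, each of these pairs satisfies $\eta(\cdot)=\varepsilon(\cdot)\wedge\rho(\cdot)$ with respect to the Cartesian-product memberships $\varepsilon$ and $\rho$ of Definition \ref{17}.

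Second, I would apply Theorem \ref{22} to each of the two effective pairs in turn. Because each pair satisfies $\eta=\wedge\{\varepsilon,\rho\}$, Theorem \ref{22} immediately classifies it as a strong pair. Combining the two steps, both $((a_1,v),(a_1,v)(a_2,v))$ and $((a_2,v),(a_1,v)(a_2,v))$ are strong, which is the desired conclusion.

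There is essentially no genuine obstacle in this argument: the content has already been discharged in Proposition \ref{23} (the effectiveness computation) and Theorem \ref{22} (effective $\Rightarrow$ strong). The only point requiring a moment's care is making explicit that the quantity $\wedge\{\varepsilon(x),\rho(xy)\}$ appearing in the hypothesis of Theorem \ref{22} is exactly the defining equation of an effective pair, so that the output of Proposition \ref{23} feeds directly into the input of Theorem \ref{22}; once that identification is noted, the corollary follows by concatenation.
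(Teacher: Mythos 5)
Your proposal is correct and coincides with the paper's own treatment: the paper derives Corollary \ref{24} in exactly this way, stating that it ``follows from Theorem \ref{22} and Proposition \ref{23}'' (effectiveness from Proposition \ref{23}, then effective $\Rightarrow$ strong via Theorem \ref{22}). Your added remark identifying the hypothesis of Theorem \ref{22} with the definition of an effective pair is the only bridging step needed, and you have handled it correctly.
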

\begin{prop}\label{25}
Let $\tilde{\mathscr X_1}$ and $\tilde{\mathscr X_2}$ be two FIGs. If the pairs $(a_1,a_1a_2)$ and $(a_2,a_1a_2)$ are effective pairs in $\tilde{\mathscr X_1}$, then for any $v\in V_2$, the pairs $((a_1,v),(a_1,v)(a_2,v))$ and $((a_2,v),(a_1,v)(a_2,v))$ are effective in $\tilde{\mathscr X}$.
\end{prop}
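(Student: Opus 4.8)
The plan is to verify the defining identity of an effective pair, $\eta(x,xy)=\varepsilon(x)\wedge\rho(xy)$, directly from the Cartesian-product formulas in Definition \ref{17}, reducing each product quantity to data in $\tilde{\mathscr X_1}$, where effectiveness is assumed. Since we work in the slice $b_1=b_2=v$ with $a_1a_2\in E_1$, all three relevant product values are supplied by the second branch of each case in Definition \ref{17}, so no path-strength arguments are needed; this is purely a matter of expanding and matching minima.

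First I would record the three quantities attached to the pair $((a_1,v),(a_1,v)(a_2,v))$: the vertex weight $\varepsilon(a_1,v)=\varepsilon_1(a_1)\wedge\varepsilon_2(v)$, the edge weight $\rho((a_1,v)(a_2,v))=\rho_1(a_1a_2)\wedge\varepsilon_2(v)$, and the pair weight $\eta((a_1,v),(a_1,v)(a_2,v))=\eta_1(a_1,a_1a_2)\wedge\varepsilon_2(v)$. Next I would invoke the hypothesis that $(a_1,a_1a_2)$ is effective in $\tilde{\mathscr X_1}$, that is $\eta_1(a_1,a_1a_2)=\varepsilon_1(a_1)\wedge\rho_1(a_1a_2)$, and substitute it into the pair weight to obtain $\varepsilon_1(a_1)\wedge\rho_1(a_1a_2)\wedge\varepsilon_2(v)$. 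Comparing this with $\varepsilon(a_1,v)\wedge\rho((a_1,v)(a_2,v))=\varepsilon_1(a_1)\wedge\varepsilon_2(v)\wedge\rho_1(a_1a_2)\wedge\varepsilon_2(v)$ and collapsing the repeated $\varepsilon_2(v)$ by idempotence of $\wedge$, both sides reduce to $\varepsilon_1(a_1)\wedge\rho_1(a_1a_2)\wedge\varepsilon_2(v)$, so the pair is effective in $\tilde{\mathscr X}$.

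The pair at $a_2$, namely $((a_2,v),(a_1,v)(a_2,v))$, is treated identically: one replaces $a_1$ by $a_2$ in the pair-weight formula, uses the effectiveness of $(a_2,a_1a_2)$ to rewrite $\eta_1(a_2,a_1a_2)=\varepsilon_1(a_2)\wedge\rho_1(a_1a_2)$, and again matches both sides to $\varepsilon_1(a_2)\wedge\rho_1(a_1a_2)\wedge\varepsilon_2(v)$. I expect no genuine obstacle; the computation is the same bookkeeping already carried out for the general Cartesian product in Proposition \ref{21}, the only point requiring care being the doubled $\varepsilon_2(v)$ factor, which is harmless once idempotence of the minimum is applied. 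Thus the result is essentially a direct specialisation of that effective-pair computation to the edge-slice determined by $a_1a_2\in E_1$ and a fixed $v\in V_2$.
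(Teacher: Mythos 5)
Your proposal is correct and follows essentially the same route as the paper's proof: a direct expansion of the Cartesian-product formulas from Definition \ref{17} followed by substitution of the effectiveness hypothesis $\eta_1(a_i,a_1a_2)=\varepsilon_1(a_i)\wedge\rho_1(a_1a_2)$. The only cosmetic difference is that the paper first collapses $\varepsilon_1(a_i)\wedge\rho_1(a_1a_2)$ to $\rho_1(a_1a_2)$ using $\rho_1\leq\varepsilon_1$, while you keep the factor and invoke idempotence of $\wedge$; both reduce to the same identity.
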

\begin{proof}
Consider two FIGs, $\tilde{\mathscr X_1}$ and $\tilde{\mathscr X_2}$. Since $(a_1,a_1a_2)$ and $(a_2,a_1a_2)$ are effective pairs in $\tilde{\mathscr X_1}$, $\eta_1(a_1,a_1a_2)=\rho_1(a_1a_2)\wedge\varepsilon_1(a_1)=\rho_1(a_1a_2)$ and  $\eta_1(a_2,a_1a_2)=\rho_1(a_1a_2)\wedge\varepsilon_1(a_2)=\rho_1(a_1a_2)$.
Therefore, \\$\eta((a_1,v),(a_1,v)(a_2,v))=\eta_1(a_1,a_1a_2)\wedge\varepsilon_2(v)=\rho_1(a_1a_2)\wedge\varepsilon_2(v)=\rho((a_1,v)(a_2,v))\wedge\varepsilon_2(v)$ and \\ $\eta((a_2,v),(a_1,v)(a_2,v))=\eta_1(a_2,a_1a_2)\wedge\varepsilon_2(v)=\rho_1(a_1a_2)\wedge\varepsilon_2(v)=\rho((a_1,v)(a_2,v))\wedge\varepsilon_2(v)$. Hence the pairs $((a_1,v),(a_1,v)(a_2,v))$ and $((a_2,v),(a_1,v)(a_2,v))$ are effective in $\tilde{\mathscr X}$. 
\end{proof}
\begin{cor}\label{26}
Let $\tilde{\mathscr X_1}$ and $\tilde{\mathscr X_2}$ be two FIGs. If the pairs $(a_1,a_1a_2)$ and $(a_2,a_1a_2)$ are effective pairs in $\tilde{\mathscr X_1}$, then for any $v\in V_2$, the pairs $((a_1,v),(a_1,v)(a_2,v))$ and $((a_2,v),(a_1,v)(a_2,v))$ are strong in $\tilde{\mathscr X}$.
\end{cor}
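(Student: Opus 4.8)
The plan is to recognise that this corollary is an immediate consequence of the two results that immediately precede it, namely Proposition \ref{25} and Theorem \ref{22}; no fresh computation is needed. First I would invoke Proposition \ref{25} directly against the hypothesis. Since $(a_1,a_1a_2)$ and $(a_2,a_1a_2)$ are assumed to be effective pairs in $\tilde{\mathscr X_1}$, Proposition \ref{25} guarantees that, for any $v\in V_2$, the two pairs $((a_1,v),(a_1,v)(a_2,v))$ and $((a_2,v),(a_1,v)(a_2,v))$ are effective in the Cartesian product $\tilde{\mathscr X}=\tilde{\mathscr X_1}\times\tilde{\mathscr X_2}$. This single application already transfers the hypothesis from the factor $\tilde{\mathscr X_1}$ up to the product.

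Next I would recall that a pair being effective means, by definition, that it satisfies the equality $\eta(x,xy)=\wedge\{\varepsilon(x),\rho(xy)\}$, which is precisely the hypothesis of Theorem \ref{22}. Consequently both pairs $((a_1,v),(a_1,v)(a_2,v))$ and $((a_2,v),(a_1,v)(a_2,v))$ meet the condition of Theorem \ref{22} within $\tilde{\mathscr X}$, and that theorem then yields at once that each of them is a strong pair. Stringing the two facts together, namely ``effective in $\tilde{\mathscr X}$'' from Proposition \ref{25} followed by ``effective implies strong'' from Theorem \ref{22}, completes the argument.

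There is effectively no obstacle to surmount here, because all the substantive work has already been discharged in the proof of Proposition \ref{25}, where the weight of each pair is unwound through the definition of the Cartesian product (Definition \ref{17}) and shown to equal $\rho((a_1,v)(a_2,v))\wedge\varepsilon_2(v)$. Once that effectiveness is established, the passage to strongness is purely a citation of Theorem \ref{22}. Thus the corollary is a one-line deduction, and the only care required is to state explicitly that effectiveness is the hypothesis that triggers Theorem \ref{22}.
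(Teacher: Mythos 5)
Your proposal is correct and matches the paper's intended argument exactly: just as the paper notes that Corollary \ref{24} follows from Theorem \ref{22} and Proposition \ref{23}, Corollary \ref{26} follows by citing Proposition \ref{25} for effectiveness of the product pairs and then Theorem \ref{22} for ``effective implies strong.'' Nothing further is needed.
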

\begin{prop}\label{27}
Let $\tilde{\mathscr X_1}$ and $\tilde{\mathscr X_2}$ be two CFIGs with $m$ and $n$ vertices respectively. Let $D$ be a minimum dominating set of $\tilde{\mathscr X_1}\times\tilde{\mathscr X_2}$, then $|D|=\wedge\{m,n\}$ and $\gamma_{IS}=\wedge\{m,n\}\times$w where w is the weight of vertex having the least weight in $V_1 \cup V_2$.
\end{prop}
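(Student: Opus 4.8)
The plan is to reduce the statement to a covering problem on the $m\times n$ array of vertices from Remark \ref{18} and then to evaluate the weight contributed by each chosen vertex using completeness of the factors. Since $\tilde{\mathscr X_1}$ and $\tilde{\mathscr X_2}$ are CFIGs they are SFIGs, so by Theorem \ref{19} the product $\tilde{\mathscr X}=\tilde{\mathscr X_1}\times\tilde{\mathscr X_2}$ is a SFIG; hence every pair of $\tilde{\mathscr X}$ is strong and a vertex dominates exactly itself together with its adjacent vertices. Writing the vertices as the array in Remark \ref{18}, completeness of $\tilde{\mathscr X_1}$ and $\tilde{\mathscr X_2}$ makes $(a_i,b_j)$ adjacent to every vertex in its row and every vertex in its column, so $(a_i,b_j)$ dominates precisely its own row and its own column.

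For the cardinality I would first note that selecting one vertex in each row yields a SIDS, and likewise one vertex in each column, so a SIDS of size $\wedge\{m,n\}$ exists. For the reverse inequality, suppose $|D|<\wedge\{m,n\}$. Then the vertices of $D$ lie in fewer than $m$ rows and in fewer than $n$ columns, so there is a row $r$ and a column $c$ meeting no vertex of $D$; the vertex $(a_r,b_c)$ is then contained neither in a row nor in a column holding a vertex of $D$, hence is undominated, a contradiction. Therefore $|D|=\wedge\{m,n\}$.

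It remains to show that each chosen vertex contributes exactly $w$. Fix $(a_i,b_j)$ and compute, via Definition \ref{17} together with the CFIG identities $\eta_t(x,xy)=\varepsilon_t(x)\wedge\rho_t(xy)$ and $\rho_t(xy)=\varepsilon_t(x)\wedge\varepsilon_t(y)$, the weights of the pairs incident at $(a_i,b_j)$: the row pairs have weight $\varepsilon_1(a_i)\wedge\varepsilon_2(b_j)\wedge\varepsilon_2(b_k)$ for $k\neq j$, and the column pairs have weight $\varepsilon_1(a_i)\wedge\varepsilon_1(a_l)\wedge\varepsilon_2(b_j)$ for $l\neq i$. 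Taking the minimum over all incident pairs and factoring out $\varepsilon_1(a_i)\wedge\varepsilon_2(b_j)$, the minimum telescopes to $\big(\bigwedge_{l}\varepsilon_1(a_l)\big)\wedge\big(\bigwedge_{k}\varepsilon_2(b_k)\big)$, which is exactly $w$, the least weight among all vertices of $V_1\cup V_2$. Thus the minimum strong pair weight at every vertex equals $w$.

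Consequently $W(D)=\sum_{x\in D}w=|D|\,w$ for every SIDS $D$, so the weight is minimized precisely when $|D|$ is minimized; combined with $|D|=\wedge\{m,n\}$ this gives $\gamma_{IS}=\wedge\{m,n\}\times w$. I expect the main obstacle to be twofold: establishing the lower bound on $|D|$ cleanly (the pigeonhole step producing an uncovered row and an uncovered column), and verifying that the minimum over incident pairs genuinely collapses to the global minima $\bigwedge_k\varepsilon_2(b_k)$ and $\bigwedge_l\varepsilon_1(a_l)$. The latter requires each vertex to possess at least one row neighbour and one column neighbour, i.e. the non-trivial case $m,n\geq 2$, so that the minima over $k\neq j$ and $l\neq i$ recombine with $\varepsilon_2(b_j)$ and $\varepsilon_1(a_i)$ to yield the two global minima.
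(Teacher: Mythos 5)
Your proof is correct, and while it follows the same global strategy as the paper (the row--column array of Remark \ref{18}, a dominating set of size $\wedge\{m,n\}$, a pigeonhole lower bound, and weight $\wedge\{m,n\}\cdot w$), it diverges in two places, and in both the divergence is an improvement. First, the paper computes the weight only for the specific diagonal set $D_1=\{(a_1,b_1),\ldots,(a_m,b_m)\}$, splitting into cases according to whether the globally lightest vertex lies in $V_1$ or $V_2$, and then simply asserts that among all dominating sets of size $m$ the least weight is attained by $D_1$; you instead show that at \emph{every} vertex $(a_i,b_j)$ the minimum incident strong-pair weight collapses to $\bigl(\bigwedge_l\varepsilon_1(a_l)\bigr)\wedge\bigl(\bigwedge_k\varepsilon_2(b_k)\bigr)=w$, so $W(D)=|D|\,w$ for any SIDS, which makes the weight minimization literally equivalent to cardinality minimization and renders the paper's unproved optimality claim unnecessary. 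Second, your lower bound is tighter than the paper's: the paper argues that with $m-1$ vertices ``there exists a row or column of vertices which is not dominated,'' which as stated does not suffice (every vertex of an uncovered row could still be dominated through its column); your version correctly uses $|D|<\wedge\{m,n\}$ to produce simultaneously an uncovered row $r$ \emph{and} an uncovered column $c$, whence $(a_r,b_c)$ is undominated. One small remark: your caveat that the collapse to $w$ requires $m,n\geq 2$ is overcautious --- if, say, $m=1$, the column minimum $\bigwedge_{l\neq i}\varepsilon_1(a_l)$ is vacuous but $\varepsilon_1(a_i)$ is already the full minimum over $V_1$, so the identity still holds; the only genuinely degenerate case is $m=n=1$, where the product has no pairs at all and the statement is vacuous for the paper as well.
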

\begin{proof}
Let $\tilde{\mathscr X_1}$ and $\tilde{\mathscr X_2}$ be two CFIGs with $m$ and $n$ vertices respectively. Let $a_1,a_2,...,a_m$ be the vertices of $\tilde{\mathscr X_1}$ and $b_1,b_2,...,b_n$ be the vertices of $\tilde{\mathscr X_2}$. Suppose that $m<n$. Then, $D_1=\{(a_1,b_1),(a_2,b_2),....,(a_m,b_m)\} $ will dominate all vertices of $\tilde{\mathscr X}$. This is because, $(a_1,b_1)$ dominates the vertices in the set $\{(a_1,b_2),(a_1,b_3),...,(a_1,b_n)\}$, $(a_2,b_2)$ dominates the vertices in $\{(a_2,b_1),(a_2,b_3),...,(a_2,b_n)\}$ and so on. Finally, $(a_m,b_m) $ dominates vertices in $\{(a_m,b_1),(a_m,b_2),...,(a_m,b_n)\}$. Therefore all vertices in $\tilde{\mathscr X}$ are dominated by $m$ vertices in  $D_1$. \\
\textbf{Case 1}: Vertex with least weight belongs to $\tilde{\mathscr X_1}.$\\
Without loss of generality assume that $a_1$ is a vertex in $\tilde{\mathscr X_1}$ with least weight. Then the pairs incident at $a_1$ has weight $\varepsilon_1(a_1)$. Now, consider the dominating set $D_1$. The first vertex $(a_1,b_1)$ is adjacent to $(a_1,b_2)$,  $\eta((a_1,b_1),(a_1,b_1)(a_1,b_2))=\varepsilon_1(a_1)\wedge\eta_2(b_1,b_1b_2)=\varepsilon_1(a_1)\wedge\varepsilon_2(b_1)\wedge\rho_2(b_1b_2)=\varepsilon_1(a_1)\wedge\varepsilon_2(b_1)\wedge\varepsilon_2(b_2)$=$\varepsilon_1(a_1)$ i.e, the minimum of the weight of the pairs at $(a_1,b_1)$ is $\varepsilon_1(a_1)$. Similarly, $(a_2,b_2)$ is adjacent to $(a_1,b_2)$ and the weight of the pair $((a_2,b_2),(a_2,b_2)(a_1,b_2))$ is $\varepsilon_1(a_1)$ and so on. In the same way $(a_m,b_m)$ is adjacent to $(a_1,b_m)$ and the minimum of the weight of the pairs at $(a_m,b_m)$ is $\varepsilon_1(a_1)$. Therefore, the weight of $D_1$ is $m\varepsilon_1(a_1)$.\\
\textbf{Case 2}: Vertex with least weight belongs to $\tilde{\mathscr X_2}.$\\
Without loss of generality assume that $b_1$ is a vertex in $\tilde{\mathscr X_2}$ with least weight. Then the pairs incident at $b_1$ has weight $\varepsilon_2(b_1)$. Now consider the dominating set $D_1$. The first vertex $(a_1,b_1)$ is adjacent to $(a_2,b_1)$, and the weight of the pair $((a_1,b_1),(a_1,b_1)(a_2,b_1))$ is $\varepsilon_2(b_1)$, i.e, the minimum of the weight of the pairs at $(a_1,b_1)$ is $\varepsilon_2(b_1)$. Similarly, $(a_2,b_2)$ is adjacent to $(a_2,b_1)$ and the weight of the pair $((a_2,b_2),(a_2,b_2)(a_2,b_1))$ is $\varepsilon_2(b_1)$ and so on. In the same way $(a_m,b_m)$ is adjacent to $(a_m,b_1)$ and the minimum of the weight of the pairs at $(a_m,b_m)$ is $\varepsilon_2(b_1)$. Therefore, the weight of $D_1$ is $m\varepsilon_2(b_1)$.\\
Among the dominating sets with $m$ vertices, the least weight is obtained for $D_1$ which is $m$w, where w is the weight of vertex in $\tilde{\mathscr X_1}$ or $\tilde{\mathscr X_2}$ having the least weight. 
Now, assume $D_2=\{a_1,a_2,...,a_{m-1}\}$ is a dominating set of $\tilde{\mathscr X_1}$ with $m-1$ vertices. Since there are $m$ rows of vertices in $\tilde{\mathscr X}$, there exists a row or column of vertices which is not dominated by any of the $m-1$ vertices. \\
Hence, the minimum dominating set contains $m$ vertices and $\gamma_{IS}=m$w where w is the vertex having the least weight in $V_1 \cup V_2$.\\
The case of $n<m$ and $n=m$ can be proved in a similar way and in that case the minimum dominating set contains $n$ vertices and $\gamma_{IS}=n$w where w is the weight of vertex having the least weight.\\
Hence the result.
\end{proof}
\begin{thm}\label{28}
Let $\tilde{\mathscr X_1}$ and $\tilde{\mathscr X_2}$ be two SFIGs, and $D_1$ and $D_2$ are  SIDS of $\tilde{\mathscr X_1}$ and $\tilde{\mathscr X_2}$ respectively. Then $D_1\times V2$ and $V_1\times D_2 $ are SIDS of $\tilde{\mathscr X}$ and $$\gamma_{IS}\leq \wedge\{W(D_1 \times V_2), W(V_1\times D_2)\}.$$
\end{thm}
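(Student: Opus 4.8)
The plan is to reduce strong incidence domination in the product to ordinary adjacency-domination, and then transport the dominating sets $D_1$ and $D_2$ through the Cartesian product structure. First I would invoke Theorem \ref{19}: since $\tilde{\mathscr X_1}$ and $\tilde{\mathscr X_2}$ are SFIGs, $\tilde{\mathscr X}=\tilde{\mathscr X_1}\times\tilde{\mathscr X_2}$ is a SFIG, so every pair of $\tilde{\mathscr X}$ is strong. Consequently, whenever two vertices of $\tilde{\mathscr X}$ are joined by an edge, both incident pairs on that edge are strong, so each endpoint is a SIN of the other. Thus a set dominates $\tilde{\mathscr X}$ in the SID sense exactly when it dominates in the usual adjacency sense, which makes the combinatorial verification transparent.

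Next I would check that $D_1\times V_2$ is a SIDS. Take any vertex $(x,y)\in V-(D_1\times V_2)$; then necessarily $x\notin D_1$, for otherwise $(x,y)$ would already lie in $D_1\times V_2$. Because $D_1$ is a SIDS of $\tilde{\mathscr X_1}$ and $x\notin D_1$, there is some $a\in D_1$ with $x$ a SIN of $a$ in $\tilde{\mathscr X_1}$; in particular $xa\in E_1$ with $(x,xa)$ and $(a,xa)$ strong (so these pairs also lie in $I_1$). The vertex $(a,y)$ belongs to $D_1\times V_2$, and by the adjacency rule of Definition \ref{17} the vertices $(x,y)$ and $(a,y)$ are adjacent in $\tilde{\mathscr X}$, since they share the second coordinate $y$ while $xa\in E_1$. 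By the first step this edge carries strong pairs, so $(x,y)$ is a SIN of $(a,y)$ and is therefore dominated by a member of $D_1\times V_2$. Hence $D_1\times V_2$ is a SIDS of $\tilde{\mathscr X}$.

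Interchanging the roles of the two factors, the identical argument (agreement in the first coordinate, adjacency in the second) shows that $V_1\times D_2$ is also a SIDS of $\tilde{\mathscr X}$. Finally, since $\gamma_{IS}$ is by definition the minimum weight among all SIDSs of $\tilde{\mathscr X}$, it is bounded above by the weight of each of these two particular SIDSs, yielding $\gamma_{IS}\leq \wedge\{W(D_1\times V_2),W(V_1\times D_2)\}$.

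The main obstacle is the single domination step: one must be sure that the witness $a\in D_1$ adjacent to $x$ really produces a product-vertex $(a,y)$ adjacent to $(x,y)$, and that the connecting pair is strong. The adjacency follows immediately from the Cartesian product definition, while the strength is precisely what Theorem \ref{19} supplies; once both are in hand the argument is routine. It is worth emphasising that the result is only an inequality for $\gamma_{IS}$, because the transported sets $D_1\times V_2$ and $V_1\times D_2$ need not be \emph{minimum} SIDSs of the product, so they furnish an upper bound rather than the exact value.
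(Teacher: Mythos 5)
Your proposal is correct and follows essentially the same route as the paper's proof: invoke Theorem \ref{19} to make every pair of $\tilde{\mathscr X_1}\times\tilde{\mathscr X_2}$ strong, transport a dominator of the first coordinate along an edge of the form $(x,y)(a,y)$ guaranteed by Definition \ref{17} (and symmetrically in the second coordinate for $V_1\times D_2$), then bound $\gamma_{IS}$ by the weights of these two explicit SIDSs. The only cosmetic difference is that the paper spends a separate case analysis on isolated vertices of the factors, which your argument correctly subsumes, since $(x,y)\notin D_1\times V_2$ forces $x\notin D_1$, and the SIDS definition then directly supplies a strong incidence neighbor $a\in D_1$ (an isolated $x$ would necessarily lie in $D_1$).
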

\begin{proof}
Let $\tilde{\mathscr X_1}$ and $\tilde{\mathscr X_2}$ be two SFIGs, and $D_1$ and $D_2$ are  SIDS of $\tilde{\mathscr X_1}$ and $\tilde{\mathscr X_2}$ respectively. By Theorem \ref{19}, $\tilde{\mathscr X}$ is a SFIG.\\
Let $(u,v)$ be any vertex in $\tilde{\mathscr X}$. There are two cases:\\
\textbf{Case 1}: At least one of $u$ or $v$ or both are isolated.\\
\textbf{Sub case 1}: Vertex $u$ is isolated in $\tilde{\mathscr X_1}$.\\
Since $D_1$ is a SIDS, $u\in D_1$. If $v\in D_2$, then $(u,v)\in D_1 \times V_2$ and $(u,v)\in V_1\times D_2$. And if $v\notin D_2$, then there exists a vertex $v'\in D_2$ such that $v'$ dominates $v$ in $\tilde{\mathscr X_2}$. Hence, $(u,v')\in V_1\times D_2$ dominates $(u,v)$ in $\tilde{\mathscr X}$. Also, $(u,v)\in D_1 \times V_2$. \\
\textbf{Sub case 2}: Vertex $v$ is isolated in $\tilde{\mathscr X_2}$.\\
Since $D_2$ is a SIDS, $v\in D_2$. If $u\in D_1$, then $(u,v)\in D_1 \times V_2$ and $(u,v)\in V_1\times D_2$. And if $u\notin D_1$, then there exists a vertex $u'\in D_1$ such that $u'$ dominates $u$ in $\tilde{\mathscr X_1}$. Hence, $(u',v)\in D_1 \times V_2$ dominates $(u,v)$ in $\tilde{\mathscr X}$. Also, $(u,v)\in V_1\times D_2$.\\
\textbf{Sub case 3}: Both vertices $u$ and $v$ are isolated in $\tilde{\mathscr X_1}$ and $\tilde{\mathscr X_2}$ respectively. \\
Then $(u,v)\in D_1 \times V_2$ and $(u,v)\in V_1\times D_2$.\\
Hence if at least one of $u$ and $v$ is an isolated vertex in $\tilde{\mathscr X_1}$ and $\tilde{\mathscr X_2}$ respectively, then $(u,v)$ either belongs to $D_1 \times V_2$ and $V_1\times D_2$ or is dominated by vertex in $D_1 \times V_2$ and $V_1\times D_2$.\\
\textbf{Case 2}: Both $u$ and $v$ are non isolated.\\
Consider $D_1 \times V_2$, and if $(u,v)\notin D_1 \times V_2$, then $u\notin D_1$ and
since $D_1$ is a SIDS of $\tilde{\mathscr X_1}$, there exists $u'\in D_1$ such that $(u,uu')$ and $(u',uu')$ are strong pairs in $\tilde{\mathscr X_1}$, i.e., $u'$ dominates $u$ in $\tilde{\mathscr X_1}$. Then, by the definition of Cartesian product of FIGs $(u,v)$ and $(u',v)$ are adjacent.
Therefore, $(u,v)(u',v)\in E$ and $((u,v),(u,v)(u',v))$ and $((u',v),(u,v)(u',v))$ are strong pairs in $\tilde{\mathscr X}$, i.e., $(u',v)$ dominates $(u,v)$.
Hence $D_1 \times V_2$ is a SIDS in $\tilde{\mathscr X}$. Similarly, $V_1\times D_2$ is also a SIDS in $\tilde{\mathscr X}$.\\
Therefore, clearly $\gamma_{IS}\leq \wedge\{W(D_1 \times V_2), W(V_1\times D_2)\}$.
\end{proof}

\section{Strong Incidence Domination in Tensor Product of Fuzzy Incidence Graphs}
The section discusses the tensor product of FIGs, SFIGs and FIGs with effective pairs. Theorem \ref{30} proves that the tensor product of FIGs with effective pairs is a FIG with effective pairs. Theorem \ref{33} establishes that the tensor product of two SFIGs is a SFIG. Theorem \ref{34} discusses the SID in the tensor product of two SFIG.
\begin{defn}\label{29}
Let $\tilde{\mathscr X_1}=(V_1,E_1,I_1,\varepsilon_1,\rho_1,\eta_1)$ and $\tilde{\mathscr X_2}=(V_2,E_2,I_2,\varepsilon_2,\rho_2,\eta_2)$ be two FIGs. Then the tensor product of $\tilde{\mathscr X_1}$ and $\tilde{\mathscr X_2}$ denoted as $\tilde{\mathscr X_1}\diamond \tilde{\mathscr X_2}$ is the FIG, $\tilde{\mathscr X}=(V,E,I,\varepsilon,\rho,\eta)$ such that: $V=V_1 \times V_2$, $E=\{(a_1,b_1)(a_2,b_2)| \quad a_1a_2\in E_1 \text{ and } b_1b_2\in E_2\}$ and $I=\{((a_1,b_1), (a_1,b_1)(a_2,b_2))| \quad (a_1,a_1a_2), (a_2,a_1a_2)\in I_1 \text{ and } (b_1,b_1b_2), (b_2,b_1b_2)\in I_2\}$ and 
$$\varepsilon(a_1, b_1)= \varepsilon_1(a_1)\wedge\varepsilon_2(b_1) \quad \forall (a_1,a_2)\in V_1\times V_2
$$
$$\rho((a_1,b_1)(a_2,b_2))=\rho_1(a_1a_2)\wedge\rho_2(b_1b_2) 
$$
$$\eta((a_1,b_1), (a_1,b_1)(a_2,b_2))= \eta_1(a_1,a_1a_2)\wedge\eta_2(b_1,b_1b_2)
$$
\end{defn}
\begin{thm}\label{30}
Let $\tilde{\mathscr X_1}$ and $\tilde{\mathscr X_2}$ be two FIGs with effective pairs, then $\tilde{\mathscr X}=\tilde{\mathscr X_1}\diamond\tilde{\mathscr X_2}$ is a FIG with effective pairs.
\end{thm}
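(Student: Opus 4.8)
The plan is to mirror the argument of Proposition \ref{21}, only more cleanly, since the tensor product has no case split in its definition. I would pick an arbitrary pair of $\tilde{\mathscr X}$, expand its $\eta$-value using Definition \ref{29}, substitute the effectiveness hypotheses on the two factors, and then regroup the resulting minimums so that the two vertex-weights collect into $\varepsilon$ and the two edge-weights collect into $\rho$.

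First I would fix an arbitrary pair $((a_1,b_1),(a_1,b_1)(a_2,b_2))$ in $\tilde{\mathscr X}$. By Definition \ref{29}, such a pair occurs only when $(a_1,a_1a_2),(a_2,a_1a_2)\in I_1$ and $(b_1,b_1b_2),(b_2,b_1b_2)\in I_2$, and its weight is $\eta((a_1,b_1),(a_1,b_1)(a_2,b_2))=\eta_1(a_1,a_1a_2)\wedge\eta_2(b_1,b_1b_2)$. Since $\tilde{\mathscr X_1}$ and $\tilde{\mathscr X_2}$ are FIGs with effective pairs, I then substitute $\eta_1(a_1,a_1a_2)=\varepsilon_1(a_1)\wedge\rho_1(a_1a_2)$ and $\eta_2(b_1,b_1b_2)=\varepsilon_2(b_1)\wedge\rho_2(b_1b_2)$.

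Using commutativity and associativity of $\wedge$, the expression regroups as $[\varepsilon_1(a_1)\wedge\varepsilon_2(b_1)]\wedge[\rho_1(a_1a_2)\wedge\rho_2(b_1b_2)]$, which by Definition \ref{29} is exactly $\varepsilon(a_1,b_1)\wedge\rho((a_1,b_1)(a_2,b_2))$. Thus the chosen pair satisfies the effectiveness condition $\eta=\wedge\{\varepsilon,\rho\}$, and since the pair was arbitrary, every pair of $\tilde{\mathscr X}$ is effective, so $\tilde{\mathscr X}$ is a FIG with effective pairs.

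I do not expect a genuine obstacle: because the tensor product defines $\varepsilon$, $\rho$, and $\eta$ each as a single $\wedge$ of the corresponding factor quantities (in contrast to the join in Definition \ref{5}, whose piecewise definition forced the case analysis in Theorem \ref{10}), the entire argument reduces to one regrouping of four minimums. The only point requiring care is the bookkeeping at the start, namely verifying that an incidence pair exists in $\tilde{\mathscr X}$ precisely when the underlying incidence pairs exist in both factors, so that the substitution of the two effectiveness identities is always legitimate.
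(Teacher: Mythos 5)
Your proposal is correct and takes essentially the same route as the paper's own proof: both fix an arbitrary pair, expand its weight via Definition \ref{29} as $\eta_1(a_1,a_1a_2)\wedge\eta_2(b_1,b_1b_2)$, substitute the two effectiveness identities, and regroup the four minimums into $\varepsilon(a_1,b_1)\wedge\rho((a_1,b_1)(a_2,b_2))$. Your added remark that an incidence pair exists in the product precisely when the corresponding pairs exist in both factors is implicit in the paper's argument and is a harmless extra precision.
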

\begin{proof}
Let $\tilde{\mathscr X}=\tilde{\mathscr X_1}\diamond\tilde{\mathscr X_2}$ be the tensor product of FIGs, $\tilde{\mathscr X_1}$ and $\tilde{\mathscr X_2}$, with effective pairs.
Let $((a_1,b_1), (a_1,b_1)(a_2,b_2))$ be an arbitrary pair in $\tilde{\mathscr X}$.\\
Then by Definition \ref{29}, 
\begin{align*}
    \begin{split}
        \eta((a_1,b_1), (a_1,&b_1)(a_2,b_2))= \eta_1(a_1,a_1a_2)\wedge\eta_2(b_1,b_1b_2)\\
        &=\varepsilon_1(a_1)\wedge\rho_1(a_1a_2)\wedge\varepsilon_2(b_1)\wedge\rho_2(b_1b_2)\\
        &=\varepsilon_1(a_1)\wedge\varepsilon_2(b_1)\wedge\rho_1(a_1a_2)\wedge\rho_2(b_1b_2)\\
        &=\varepsilon(a_1,b_1)\wedge\rho((a_1,b_1)(a_2,b_2))
    \end{split}
\end{align*}
This implies that $((a_1,b_1), (a_1,b_1)(a_2,b_2))$ is an effective pair and since $((a_1,b_1), (a_1,b_1)(a_2,b_2))$ is an arbitrary pair in $\tilde{\mathscr X}$, every pair in $\tilde{\mathscr X}$ is an effective pair.
\end{proof}
\begin{rem}\label{31}
The tensor product of two CFIGs need not be a CFIG as in Example \ref{32}.
\end{rem}
\begin{exam}\label{32}
Consider Fig. 4. The FIG, $\tilde{\mathscr X}$ is the tensor product of FIGs, $\tilde{\mathscr X_1} $ and $\tilde{\mathscr X_2}$. It is clear that $\tilde{\mathscr X_1} $ and $\tilde{\mathscr X_2}$ are CFIGs but, $\tilde{\mathscr X}$ is not a CFIG.
\end{exam}
\begin{figure}
    \centering
    \includegraphics[width=7cm,height=9cm]{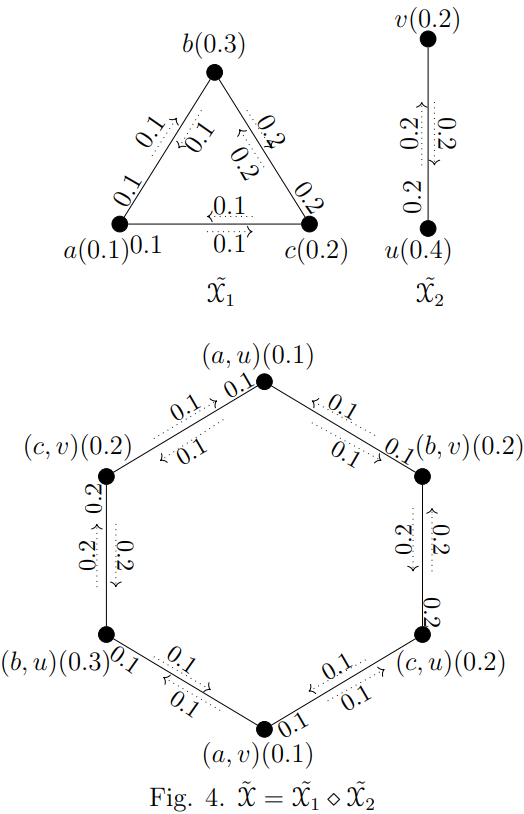}
\end{figure}
\begin{thm}\label{33}
Let $\tilde{\mathscr X_1}$ and $\tilde{\mathscr X_2}$ be two SFIGs, then $\tilde{\mathscr X}=\tilde{\mathscr X_1}\diamond\tilde{\mathscr X_2}$ is a SFIG. 
\end{thm}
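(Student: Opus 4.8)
The plan is to show directly that every pair of $\tilde{\mathscr X}=\tilde{\mathscr X_1}\diamond\tilde{\mathscr X_2}$ is strong. Fix an arbitrary pair $p=((a_1,b_1),(a_1,b_1)(a_2,b_2))$ and write $e=(a_1,b_1)(a_2,b_2)$ for its edge. By Definition \ref{29}, the only pairs incident with $e$ are $p$ and its mirror $p'=((a_2,b_2),e)$, and setting $\alpha_i=\eta_1(a_i,a_1a_2)$ and $\beta_i=\eta_2(b_i,b_1b_2)$ we have $\eta(p)=\alpha_1\wedge\beta_1$ and $\eta(p')=\alpha_2\wedge\beta_2$. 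To prove $p$ strong it suffices to verify $\eta(p)\ge ICONN_{\tilde{\mathscr X}\setminus p}(p)$, so I would examine an arbitrary incidence path $P$ from $(a_1,b_1)$ to $e$ inside $\tilde{\mathscr X}\setminus p$. The first key observation is that, since $p$ is deleted and $p'$ is the only remaining pair at $e$, every such $P$ must reach $e$ through $(a_2,b_2)$ via $p'$; hence its strength is at most $\eta(p')=\alpha_2\wedge\beta_2$, and its vertex-part is a genuine incidence path $Q$ from $(a_1,b_1)$ to $(a_2,b_2)$.

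If $\eta(p)\ge\eta(p')$, this already gives $ICONN_{\tilde{\mathscr X}\setminus p}(p)\le\eta(p')\le\eta(p)$ and $p$ is strong, so the substantive case is $\eta(p)<\eta(p')$, that is $\alpha_1\wedge\beta_1<\alpha_2$ and $\alpha_1\wedge\beta_1<\beta_2$. By symmetry of the two factors I would assume the minimum is realised in the first coordinate, say $\eta(p)=\alpha_1$ with $\alpha_1\le\beta_1$; then $\alpha_1<\alpha_2$. The plan here is to pass to projections: project $Q$ onto $\tilde{\mathscr X_1}$ to obtain an incidence walk $W$ from $a_1$ to $a_2$ in $\tilde{\mathscr X_1}$, and note from the definition of $\eta$ on the tensor product that the strength of $Q$ in $\tilde{\mathscr X}$ is at most the strength of $W$ in $\tilde{\mathscr X_1}$, since each tensor pair weight is a meet in which the first-coordinate factor appears. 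Appending the pair $(a_2,a_1a_2)$ and the edge $a_1a_2$ turns $W$ into a walk $W^{+}$ from $a_1$ to the edge $a_1a_2$ whose strength equals $(\text{strength of }W)\wedge\alpha_2$.

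The heart of the argument is to bound the strength of $W^{+}$ by $\alpha_1$. If $W^{+}$ itself uses the pair $(a_1,a_1a_2)$ this is immediate; otherwise $W^{+}$ is a walk from $a_1$ to $a_1a_2$ avoiding $(a_1,a_1a_2)$, and extracting a simple sub-path (whose strength is at least that of $W^{+}$) together with the fact that $(a_1,a_1a_2)$ is strong in the SFIG $\tilde{\mathscr X_1}$ gives $\text{strength}(W^{+})\le\alpha_1$. Since $\text{strength}(W^{+})=(\text{strength of }W)\wedge\alpha_2$ is at most $\alpha_1<\alpha_2$, we are forced to conclude $\text{strength}(W)\le\alpha_1$, whence $\text{strength}(Q)\le\alpha_1=\eta(p)$ and therefore $\text{strength}(P)\le\eta(p)$. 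As $P$ was arbitrary this yields $ICONN_{\tilde{\mathscr X}\setminus p}(p)\le\eta(p)$, so $p$ is strong; the case in which the minimum sits in the second coordinate is identical after swapping the roles of $\tilde{\mathscr X_1}$ and $\tilde{\mathscr X_2}$, and since $p$ was arbitrary $\tilde{\mathscr X}$ is a SFIG.

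The main obstacle I anticipate is the bookkeeping in passing from paths in $\tilde{\mathscr X}$ to walks in the factors: a projected walk need not be simple and may re-traverse the very edge $a_1a_2$, so the dichotomy on whether $W^{+}$ uses $(a_1,a_1a_2)$, and the walk-to-path extraction (using that a sub-path of a walk has strength at least that of the walk), must be handled carefully in order to legitimately invoke strongness in the factor. An alternative, possibly cleaner, route is to apply Theorem \ref{4}: take any cycle $C$ in $\tilde{\mathscr X}$, assume it has a unique weakest pair $q$, project $C$ onto the coordinate realising $\eta(q)$, and use strongness of the corresponding factor pair to produce a second pair of $C$ of weight at most $\eta(q)$, contradicting uniqueness and showing every cycle is a WFIC. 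The same projection subtleties reappear there, so I would develop the direct $ICONN$ estimate above as the primary proof.
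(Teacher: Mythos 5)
Your proposal is correct, and it is a projection argument like the paper's, but the two are organized quite differently. The paper projects onto \emph{both} coordinates at once and compares against the full incidence connectivity rather than the reduced one: every path in $\tilde{\mathscr X}$ from $(a_1,b_1)$ to the edge $(a_1,b_1)(a_2,b_2)$ yields paths in $\tilde{\mathscr X_1}$ and $\tilde{\mathscr X_2}$ of at least its strength, whence $ICONN_{\tilde{\mathscr X}}(p)\le ICONN_{\tilde{\mathscr X_1}}(a_1,a_1a_2)\wedge ICONN_{\tilde{\mathscr X_2}}(b_1,b_1b_2)$ for the pair $p=((a_1,b_1),(a_1,b_1)(a_2,b_2))$; strongness of the factors then enters through the identity $ICONN_{\tilde{\mathscr X_i}}=\eta_i$, valid for strong pairs, giving the single chain $\eta(p)=\eta_1\wedge\eta_2=ICONN_{\tilde{\mathscr X_1}}\wedge ICONN_{\tilde{\mathscr X_2}}\ge ICONN_{\tilde{\mathscr X}}(p)\ge ICONN_{\tilde{\mathscr X}\setminus p}(p)$. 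Because the bound is against $ICONN_{\tilde{\mathscr X}}(p)$ itself, the paper never needs your mirror-pair case $\eta(p)\ge\eta(p')$, the WLOG on which coordinate realizes the minimum $\alpha_1\wedge\beta_1$, or the dichotomy on whether the projected walk re-traverses $a_1a_2$: all three are subsumed by the two-coordinate bound, so your single-coordinate analysis is doing by hand what the meet of the two connectivities does automatically. What your route buys is rigor exactly where the paper is loosest: a projection of a path in the tensor product is in general only a walk, and your explicit walk-to-path extraction (strength can only increase when pairs are discarded), the use of strongness in its deleted-pair form $\eta_1(a_1,a_1a_2)\ge\eta_1^{\prime\infty}(a_1,a_1a_2)$, and the re-traversal dichotomy together fill the gap hidden in the paper's assertion that ``there exist paths $P_1$ and $P_2$'' of no smaller strength. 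If you want the shorter proof while keeping your care, project onto both coordinates, reduce each projected walk to a path, and use $\eta_i=ICONN_{\tilde{\mathscr X_i}}$ for strong pairs; your deleted-graph estimate then collapses to the paper's inequality chain.
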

\begin{proof}
Let $\tilde{\mathscr X_1}$ and $\tilde{\mathscr X_2}$ be two SFIGs, and $\tilde{\mathscr X}=\tilde{\mathscr X_1}\diamond\tilde{\mathscr X_2}$ be the tensor product of $\tilde{\mathscr X_1}$ and $\tilde{\mathscr X_2}$. Let \\ $((a_1,b_1), (a_1,b_1)(a_2,b_2))$ be an arbitrary pair in $\tilde{\mathscr X}$. Corresponding to each pair in $\tilde{\mathscr X}$, there exists one pair in each of $\tilde{\mathscr X_1}$ and $\tilde{\mathscr X_2}$, i.e., for $((a_1,b_1), (a_1,b_1)(a_2,b_2))\in \eta^*$ , there exist $(a_1,a_1a_2)\in \eta_{1}^*$ and $(b_1,b_1b_2)\in \eta_{2}^*$ such that $\eta((a_1,b_1), (a_1,b_1)(a_2,b_2))= \eta_1(a_1,a_1a_2)\wedge\eta_2(b_1,b_1b_2)
$. Therefore corresponding to a path, say $P$ in $\tilde{\mathscr X}$ there exist paths $P_1$ and $P_2$ in $\tilde{\mathscr X_1}$ and $\tilde{\mathscr X_2}$ respectively, incidence strengths of each of which is greater than or equal to the incidence strength of $P$ in $\tilde{\mathscr X}$. Also, corresponding to paths $P_1$ in $\tilde{\mathscr X_1}$ and $P_2$ in $\tilde{\mathscr X_2}$ there exists a path $P$ in $\tilde{\mathscr X}$. And by Definition \ref{29}, incidence strengths of $P_1, P_2$ is greater than or equal to the incidence strength of $P$ in $\tilde{\mathscr X}$. Therefore $ICONN_{\tilde{\mathscr{X}_1}}(a_1,a_1a_2)\wedge ICONN_{\tilde{\mathscr{X}_2}}(b_1,b_1b_2)\geq ICONN_{\tilde{\mathscr{X}}}((a_1,b_1),(a_1,b_1)(a_2,b_2))$. Now, $((a_1,b_1), (a_1,b_1)(a_2,b_2))$ is strong if 
\begin{align*}
 \eta&((a_1,b_1), (a_1,b_1)(a_2,b_2))\geq\\ & ICONN_{\tilde{\mathscr X}\setminus((a_1,b_1), (a_1,b_1)(a_2,b_2))}((a_1,b_1), (a_1,b_1)(a_2,b_2)).   
\end{align*}
Since $\tilde{\mathscr X_1}$ and $\tilde{\mathscr X_2}$ are SFIGs, every pair in $\tilde{\mathscr X_1}$ and $\tilde{\mathscr X_2}$ is strong. Hence,  
\begin{align*}
    \begin{split}
       \eta&((a_1,b_1), (a_1,b_1)(a_2,b_2))= \eta_1(a_1,a_1a_2)\wedge\eta_2(b_1,b_1b_2)\\
       &=ICONN_{\tilde{\mathscr{X}_1}}(a_1,a_1a_2)\wedge ICONN_{\tilde{\mathscr{X}_2}}(b_1,b_1b_2) \\
       &\geq ICONN_{\tilde{\mathscr{X}}}((a_1,b_1),(a_1,b_1)(a_2,b_2))\\
       &\geq ICONN_{\tilde{\mathscr{X}}\setminus {((a_1,b_1),(a_1,b_1)(a_2,b_2))}}((a_1,b_1),(a_1,b_1)(a_2,b_2))
       \end{split}
\end{align*}
which implies that $((a_1,b_1), (a_1,b_1)(a_2,b_2))$ is strong. Since $((a_1,b_1), (a_1,b_1)(a_2,b_2))$ is arbitrary, every pair in $\tilde{\mathscr X}$ is strong, and therefore $\tilde{\mathscr X}$ is a SFIG.
\end{proof}
\begin{thm}\label{34}
Let $\tilde{\mathscr X_1}$ and $\tilde{\mathscr X_2}$ be two SFIGs without isolated vertices, and $D_1$ and $D_2$ are SIDS of $\tilde{\mathscr X_1}$ and $\tilde{\mathscr X_2}$ respectively. Then $D_1\times V2$ and $V_1\times D_2 $ are SIDS of $\tilde{\mathscr X}$ and $$\gamma_{IS}\leq \wedge\{W(D_1 \times V_2), W(V_1\times D_2)\}.$$
\end{thm}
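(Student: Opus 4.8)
The plan is to adapt the argument used for the Cartesian product in Theorem~\ref{28}, but with careful attention to the more restrictive adjacency rule of the tensor product. First I would invoke Theorem~\ref{33} to conclude that $\tilde{\mathscr X}=\tilde{\mathscr X_1}\diamond\tilde{\mathscr X_2}$ is a SFIG, so that every pair in $\tilde{\mathscr X}$ is strong; this guarantees that whenever two vertices of $\tilde{\mathscr X}$ are adjacent, each is a SIN of the other, and hence each dominates the other.

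To show that $D_1\times V_2$ is a SIDS, I would take an arbitrary vertex $(u,v)\in V$ with $(u,v)\notin D_1\times V_2$, which forces $u\notin D_1$. Since $D_1$ is a SIDS of $\tilde{\mathscr X_1}$, there is a vertex $u'\in D_1$ dominating $u$, so $uu'\in E_1$ with both $(u,uu')$ and $(u',uu')$ strong. The crucial extra step, absent in the Cartesian case, is that the tensor product requires \emph{both} coordinates to move along edges simultaneously, so one cannot keep the second coordinate fixed. This is precisely where the hypothesis of no isolated vertices enters: because $v$ is not isolated in $\tilde{\mathscr X_2}$, there exists $v'\in V_2$ with $vv'\in E_2$, and since $\tilde{\mathscr X_2}$ is strong, $(v,vv')$ and $(v',vv')$ are strong. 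Then $uu'\in E_1$ and $vv'\in E_2$ give $(u,v)(u',v')\in E$ by Definition~\ref{29}, and $(u',v')\in D_1\times V_2$ since $u'\in D_1$ and $v'\in V_2$. As $\tilde{\mathscr X}$ is a SFIG, the pairs at this edge are strong, so $(u',v')$ dominates $(u,v)$. Hence $D_1\times V_2$ is a SIDS, and by the symmetric argument, using the absence of isolated vertices in $\tilde{\mathscr X_1}$ to supply a neighbor of $u$, so is $V_1\times D_2$.

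Finally, since both $D_1\times V_2$ and $V_1\times D_2$ are SIDSs of $\tilde{\mathscr X}$, the minimum SIDN is bounded above by the smaller of their weights, yielding $\gamma_{IS}\leq\wedge\{W(D_1\times V_2),W(V_1\times D_2)\}$.

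I expect the main obstacle to be the adjacency bookkeeping rather than any deep computation: unlike the Cartesian product, $(u,v)$ cannot be dominated by altering a single coordinate, so the no-isolated-vertices hypothesis is essential to furnish a simultaneous edge in the second factor (were $u$ isolated in $\tilde{\mathscr X_1}$, the vertex $(u,v)$ would itself be isolated in $\tilde{\mathscr X}$ and could only be dominated by inclusion in the set). Care must also be taken to verify that the dominating vertex $(u',v')$ genuinely lies in $D_1\times V_2$, which holds since only its first coordinate is constrained.
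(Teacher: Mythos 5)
Your proposal is correct and follows essentially the same route as the paper's proof: pick $u'\in D_1$ dominating $u$, use the no-isolated-vertex hypothesis to obtain a neighbor $v'$ of $v$ in $\tilde{\mathscr X_2}$, and conclude from the tensor-product adjacency rule that $(u',v')\in D_1\times V_2$ dominates $(u,v)$, with the symmetric argument for $V_1\times D_2$ and the weight bound following immediately. Your explicit appeal to Theorem~\ref{33} for the strongness of the pairs in $\tilde{\mathscr X}$ is a point the paper leaves implicit, but it is the same argument.
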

\begin{proof}
Let $\tilde{\mathscr X_1}$ and $\tilde{\mathscr X_2}$ be two SFIGs without isolated vertices, and $D_1$ and $D_2$ are  SIDS of $\tilde{\mathscr X_1}$ and $\tilde{\mathscr X_2}$ respectively.\\
Consider $D_1\times V2$, and if $(u,v)\notin D_1\times V2$. then $u\notin D_1$.
Since $D_1$ is a SIDS of $\tilde{\mathscr X_1}$, there exists $u'\in D_1$ such that $(u,uu')$ and $(u',uu')$ are strong pairs in $\tilde{\mathscr X_1}$, i.e., $u'$ dominates $u$ in $\tilde{\mathscr X_1}$. Also, as $\tilde{\mathscr X_1}$ and $\tilde{\mathscr X_2}$ are SFIGs without isolated vertices, there exists $v'\in V_2$ such that $v\neq v'$ and $vv'\in E_2$, $v $ dominates $v'$ in $\tilde{\mathscr X_2}$.\\
Therefore, $(u,v)(u',v')\in E$ and $((u,v),(u,v)(u',v'))$ and $((u',v'),(u,v)(u',v'))$ are strong pairs in $\tilde{\mathscr X}$, i.e., $(u',v')$ dominates $(u,v)$.
Hence $D_1 \times V_2$ is a SIDS in $\tilde{\mathscr X}$. Similarly, $V_1\times D_2$ is also a SIDS in $\tilde{\mathscr X}$.\\
Therefore, clearly $\gamma_{IS}\leq \wedge\{W(D_1 \times V_2), W(V_1\times D_2)\}$.
\end{proof}

\section{Strong Incidence Domination in Composition of Fuzzy Incidence Graphs}
The section studies the composition of two FIGs. Example \ref{37} illustrates that in general the composition of two SFIG need not be a SFIG. Theorem \ref{38} proves a sufficient condition for the composition of two SFIGs to be a SFIG. Theorem \ref{39} obtains a bound for the SIDN in the composition. Proposition \ref{41} deals with the composition of CFIGs.
\begin{defn}\label{35}
Let $\tilde{\mathscr X_1}=(V_1,E_1,I_1,\varepsilon_1,\rho_1,\eta_1)$ and $\tilde{\mathscr X_2}=(V_2,E_2,I_2,\varepsilon_2,\rho_2,\eta_2)$ be two FIGs. Then the composition of $\tilde{\mathscr X_1}$ and $\tilde{\mathscr X_2}$ denoted as $\tilde{\mathscr X_1}[\tilde{\mathscr X_2}]$ is the FIG, $\tilde{\mathscr X}=(V,E,I,\varepsilon,\rho,\eta)$ such that: $V=V_1 \times V_2$, $E=\{(a_1,b_1)(a_2,b_2)| a_1=a_2,\, b_1b_2 \in E_2 \text{ \, or \, } a_1a_2\in E_1\}$ and $I=\{((a_1,b_1), (a_1,b_1)(a_2,b_2))| a_1=a_2, (b_1,b_1b_2)\in I_2, (b_2,b_1b_2)\in I_2 \text{ \,or\, }(a_1,a_1a_2)\in I_1, (a_2,a_1a_2)\in I_1\}$ and 
\scriptsize{
$$\varepsilon(a_1, a_2)= \varepsilon_1(a_1)\wedge\varepsilon_2(a_2) \quad \forall (a_1,a_2)\in V_1\times V_2
$$
$$\rho((a_1,b_1)(a_2,b_2))=
\begin{cases}
\varepsilon_1(a_1)\wedge\rho_2(b_1b_2)  & if\; a_1=a_2 , b_1b_2\in E_2\\
\rho_1(a_1a_2)\wedge\varepsilon_2(b_1)  & if\; b_1=b_2 , a_1a_2\in E_1\\
\rho_1(a_1a_2)\wedge\varepsilon_2(b_1)\wedge\varepsilon_2(b_2)  & if\; b_1\neq b_2 , a_1a_2\in E_1
\end{cases} 
$$
$$\eta((a_1,b_1), (a_1,b_1)(a_2,b_2))=
\begin{cases}
\varepsilon_1(a_1)\wedge\eta_2(b_1,b_1b_2)  & if\; a_1=a_2 , \\ & \qquad (b_1,b_1b_2)\in I_2\\
\eta_1(a_1,a_1a_2)\wedge\varepsilon_2(b_1)  & if\; b_1=b_2 , \\ & \qquad (a_1,a_1a_2)\in I_1\\
\eta_1(a_1,a_1a_2)\wedge\varepsilon_2(b_1)\\ \wedge\varepsilon_2(b_2)  & if\; b_1\neq b_2 , \\ & \qquad (a_1,a_1a_2)\in I_1
\end{cases}
$$
}
\end{defn}
\begin{rem}\label{36}
In general the composition of two SFIGs need not be a SFIG. Example \ref{37} is the illustration for the same.
\end{rem}
\begin{figure}
    \centering
    \includegraphics[width=7cm, height=10cm]{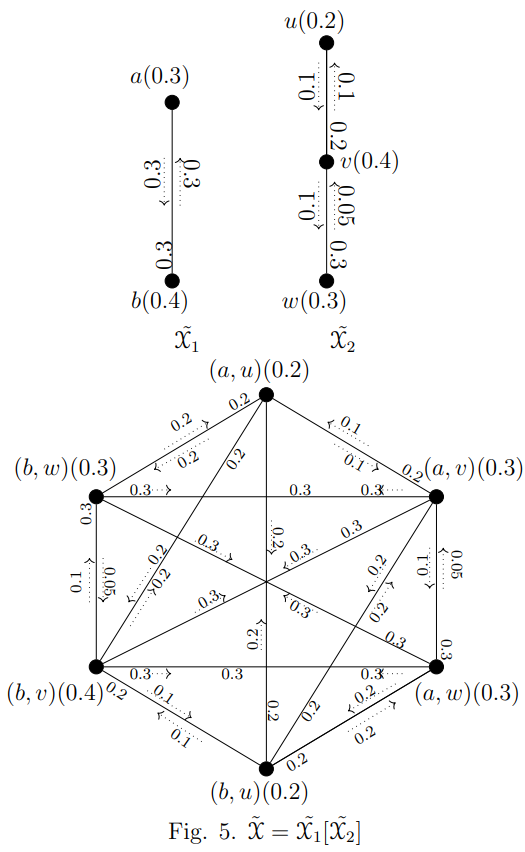}
\end{figure}
\begin{exam}\label{37}
Consider Fig. 5. Consider the cycle $C:(a,v)(b,v)(a,w)(a,v)$ in $\tilde{\mathscr X}$. The cycle $C$ has only one weakest pair $((a,w),(a,w)(a,v))$. Therefore by Theorem \ref{4}, $\tilde{\mathscr X}$ is not SFIG. 
\end{exam}

A sufficient condition for the composition of two SFIGs to be a SFIG is proved in Theorem \ref{38}.

\begin{thm}\label{38}
Let $\tilde{\mathscr X_1}$ and $\tilde{\mathscr X_2}$ be two SFIGs such that the maximum of weight of pairs in $\tilde{\mathscr X_1}$ is less than or equal to the minimum of weight of pairs in $\tilde{\mathscr X_2}$. Then the composition $\tilde{\mathscr X_1}[\tilde{\mathscr X_2}]$ is a SFIG. 
\end{thm}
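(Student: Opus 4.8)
The plan is to invoke Theorem~\ref{4}: it suffices to show that every pair of $\tilde{\mathscr X}=\tilde{\mathscr X_1}[\tilde{\mathscr X_2}]$ is strong (equivalently, that every cycle of $\tilde{\mathscr X}$ is a WFIC). Write $M=\vee\{\eta_1(a,aa'):(a,aa')\in\eta_1^*\}$ and $m=\wedge\{\eta_2(b,bb'):(b,bb')\in\eta_2^*\}$, so the hypothesis is $M\leq m$. Since $\eta_2(b,bb')\leq\varepsilon_2(b)$, every non-isolated vertex of $\tilde{\mathscr X_2}$ satisfies $\varepsilon_2(b)\geq m\geq M$, while every pair weight satisfies $\eta_1(a,aa')\leq\varepsilon_1(a)\wedge M$. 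By Definition~\ref{35} a pair of $\tilde{\mathscr X}$ is of one of two kinds: a \emph{copy pair} $((a,b_1),(a,b_1)(a,b_2))$ with $a_1=a_2=a$, of weight $\varepsilon_1(a)\wedge\eta_2(b_1,b_1b_2)$, or a \emph{cross pair} arising from an edge $a_1a_2\in E_1$, of weight $\eta_1(a_1,a_1a_2)\wedge\varepsilon_2(b_1)[\wedge\varepsilon_2(b_2)]\leq\eta_1(a_1,a_1a_2)$. I would treat the two kinds separately.

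For a cross pair $p=((a,b_1),E^*)$ with $E^*=(a,b_1)(a',b_2)$ and $aa'\in E_1$, note $\eta(p)\leq\eta_1(a,aa')$. First I would take any incidence path $Q$ from $(a,b_1)$ to the edge $E^*$ in $\tilde{\mathscr X}\setminus p$; it must terminate in the pair $((a',b_2),E^*)$. Projecting $Q$ onto $\tilde{\mathscr X_1}$ by recording only its cross pairs via $((x,\cdot),\cdot)\mapsto(x,xx')$ (copy pairs collapse) yields a walk from $a$ to the edge $aa'$. Either this walk already uses the pair $(a,aa')$, whence the corresponding cross pair of $Q$ has weight $\leq\eta_1(a,aa')$; or it avoids $(a,aa')$, and then, since $\tilde{\mathscr X_1}$ is a SFIG, $(a,aa')$ is strong, so $ICONN_{\tilde{\mathscr X_1}\setminus(a,aa')}(a,aa')\leq\eta_1(a,aa')$ forces some cross pair of $Q$ to have $\eta_1$-value $\leq\eta_1(a,aa')$. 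Either way the strength of $Q$ is $\leq\eta_1(a,aa')$; as the first and last pairs of $Q$ additionally carry the factors $\varepsilon_2(b_1)$ and $\varepsilon_2(b_2)$, this strength is in fact $\leq\eta(p)$, so $p$ is strong. This step uses only the strength of $\tilde{\mathscr X_1}$.

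For a copy pair $p=((a,b_1),(a,b_1)(a,b_2))$ of weight $\varepsilon_1(a)\wedge\eta_2(b_1,b_1b_2)$, I would split an alternative path $Q$ (from $(a,b_1)$ to $E^*=(a,b_1)(a,b_2)$, ending in $((a,b_2),E^*)$) by whether it stays inside the copy $\{a\}\times V_2$. If it stays, $Q$ descends to a path from $b_1$ to the edge $b_1b_2$ in $\tilde{\mathscr X_2}\setminus(b_1,b_1b_2)$, and the strength of $\tilde{\mathscr X_2}$ gives strength$(Q)\leq\varepsilon_1(a)\wedge\eta_2(b_1,b_1b_2)=\eta(p)$. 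If $Q$ leaves the copy, it uses a cross pair of weight $\leq\eta_1(a,a\hat a)\leq\varepsilon_1(a)\wedge M\leq\varepsilon_1(a)\wedge m\leq\varepsilon_1(a)\wedge\eta_2(b_1,b_1b_2)=\eta(p)$, where the hypothesis $M\leq m$ is exactly what makes any exit pair no stronger than $\eta(p)$; hence strength$(Q)\leq\eta(p)$ again. Thus every copy pair is strong, and combining the two cases, every pair of $\tilde{\mathscr X}$ is strong, i.e.\ $\tilde{\mathscr X}$ is a SFIG.

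The hard part will be the cross-pair case: several cross edges of $\tilde{\mathscr X}$ lie over the same edge $aa'$ of $\tilde{\mathscr X_1}$, so the projected walk may legitimately reuse the pair $(a,aa')$, which is why the argument must branch on whether the projection uses $(a,aa')$ and otherwise appeal to its strength in $\tilde{\mathscr X_1}$. A secondary point needing care is the bookkeeping of the $\varepsilon_2(b_i)$ factors, relevant when $b_1$ or $b_2$ is isolated in $\tilde{\mathscr X_2}$ so that $\varepsilon_2(b_i)<M$ is possible; these are absorbed by the first and last pairs of the alternative path. Finally, it is worth recording that the hypothesis $M\leq m$ enters only in the copy-pair case and is genuinely needed there: its failure is precisely what produces the unique weakest pair of the cycle in Example~\ref{37}.
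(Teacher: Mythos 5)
Your proposal is correct and follows essentially the same route as the paper's proof: the same split into cross pairs (over an edge of $\tilde{\mathscr X_1}$) and copy pairs (inside a copy of $\tilde{\mathscr X_2}$), projection of an alternative path onto the relevant factor graph, an appeal to strongness of the projected pair there, and the hypothesis $M\leq m$ entering only through the exit pairs in the copy-pair case (the paper merely handles that case via two sub cases, $\eta(p)=\varepsilon_1(a_1)$ and $\eta(p)=\eta_2(b_1,b_1b_2)$, which your single computation merges). If anything, your version is more careful than the paper's: your uniform projection argument with the branch on whether $(a,aa')$ is reused covers mixed alternative paths (those combining copy pairs and repeated crossings over $a_1a_2$) that the paper's Case 1 analysis does not explicitly treat, and your $\varepsilon_2(b_i)$ bookkeeping handles isolated vertices of $\tilde{\mathscr X_2}$, which the paper's simplification $\eta_1(a_1,a_1a_2)\wedge\varepsilon_2(b_1)\wedge\varepsilon_2(b_2)=\eta_1(a_1,a_1a_2)$ silently assumes away.
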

\begin{proof}
Let $\tilde{\mathscr X_1}$ and $\tilde{\mathscr X_2}$ be two SFIGs such that the maximum of weight of pairs in $\tilde{\mathscr X_1}$ is less than or equal to the minimum of weight of pairs in $\tilde{\mathscr X_2}$. Let $\tilde{\mathscr X_1}[\tilde{\mathscr X_2}]$ be the composition of $\tilde{\mathscr X_1}$ and $\tilde{\mathscr X_2}$. Consider a pair in $\tilde{\mathscr X_1}[\tilde{\mathscr X_2}]$, say $((a_1,b_1),(a_1,b_1)(a_2,b_2))$. Then, either $a_1a_2\in E_1$ or $a_1=a_2, \, b_1b_2\in E_2$. The pair is strong if $\eta((a_1,b_1),(a_1,b_1)(a_2,b_2))\geq ICONN_{\tilde{\mathscr X}\setminus((a_1,b_1),(a_1,b_1)(a_2,b_2))}((a_1,b_1),(a_1,b_1)(a_2,b_2))$. There are two cases;\\
\textbf{Case 1}: $a_1a_2\in E_1$.\\
Since the maximum of weight of pairs in $\tilde{\mathscr X_1}$ is less than or equal to the minimum of weight of pairs in $\tilde{\mathscr X_2}$, in this case, weight of the pair $((a_1,b_1),(a_1,b_1)(a_2,b_2))$ is
\begin{align*}
 \eta((a_1,b_1),(a_1,b_1)(a_2,b_2))&=\eta_1(a_1,a_1a_2)\wedge\varepsilon_2(b_1)\wedge\varepsilon_2(b_2)\\
 &=\eta_1(a_1,a_1a_2)
\end{align*}
Now, consider any path from $(a_1,b_1)$ to $(a_2,b_2)$ that consists of pairs of the form $((a_1,b_n),(a_1,b_n)(a_2,b_m))$, $b_n,b_m\in V_2$. The weight of such a pair is 
\begin{align*}
 \eta((a_1,b_n),(a_1,b_n)(a_2,b_m))&=\eta_1(a_1,a_1a_2)\wedge\varepsilon_2(b_n)\wedge\varepsilon_2(b_m)\\
 &=\eta_1(a_1,a_1a_2)
\end{align*}
Hence the strength of any such path is $\leq \eta_1(a_1,a_1a_2)$.\\
Now, consider a path from $(a_1,b_1)$ to $(a_2,b_2)$ that consists of vertices of the form $(a_m,b_n)$, $a_m\in V_1,$ $a_m\neq a_1,a_2$ and $b_n\in V_2$. Then corresponding to any such path say $P_1: (a_1,b_1),(u_1,v_1),...,(u_k,v_k),(a_2,b_2)$ in $\tilde{\mathscr X}$, $u_1,u_2,...,u_k\in \tilde{\mathscr X_1} $, and $v_1,v_2,...,v_k\in \tilde{\mathscr X_2} $, there exists a walk $P_2: a_1,u_1,...,u_k,a_2$ in $\tilde{\mathscr X_1}$. This walk together with $\{(a_2,a_1a_2),  a_1a_2,(a_1,a_1a_2)\}$ consists of a cycle. Since $\tilde{\mathscr X_1}$ is a SFIG there exists a pair of weight less than or equal to $ \eta_1(a_1,a_1a_2)$. Hence, the strength of path $P_1\cup \{(a_1,b_1)(a_2,b_2),((a_2,b_2),(a_1,b_1)(a_2,b_2))\}$ is also less than or equal to $ \eta_1(a_1,a_1a_2)$. 
Therefore the pair $((a_1,b_1),(a_1,b_1)(a_2,b_2))$ is strong.\\
\textbf{Case 2}: $a_1=a_2, \, b_1b_2\in E_2$.
There are two sub cases;\\
\textbf{Sub case 1}: $\eta((a_1,b_1),(a_1,b_1)(a_1,b_2))=\varepsilon_1(a_1)$.
Now, consider a path from $(a_1,b_1)$ to $(a_1,b_2)$ that consists of vertices of the form $(a_1,b_n)$, $b_n\in V_2$. Let one such path be $P_2: (a_1,b_1),(a_1,v_1),(a_1,v_2),...,(a_1,v_k),(a_1,b_2)$, $v_1,v_2,...,v_k\in V_2$. The weight of the pairs in the path $P_2$ are \\
\begin{equation}\tag{1}
 \begin{array}{lcl}
    \eta((a_1,b_1),(a_1,b_1)(a_1,v_1))&=&\varepsilon_1(a_1)\wedge\eta_2(b_1,b_1v_1)\\
    \eta((a_1,v_1),(a_1,b_1)(a_1,v_1))&=&\varepsilon_1(a_1)\wedge\eta_2(v_1,b_1v_1)\\
    \eta((a_1,v_1),(a_1,v_1)(a_1,v_2))&=&\varepsilon_1(a_1)\wedge\eta_2(v_1,v_1v_2)\\
    \eta((a_1,v_2),(a_1,v_1)(a_1,v_2))&=&\varepsilon_1(a_1)\wedge\eta_2(v_2,v_1v_2)\\
    \vdots\\
    \eta((a_1,v_k),(a_1,v_k)(a_1,b_2))&=&\varepsilon_1(a_1)\wedge\eta_2(v_k,v_kb_2)\\
    \eta((a_1,b_2),(a_1,v_k)(a_1,b_2))&=&\varepsilon_1(a_1)\wedge\eta_2(b_2,v_kb_2)\\
 \end{array}
\end{equation}
Hence the strength of the path is less than or equal to $ \varepsilon_1(a_1)$.\\
Now, consider a path that consists of vertices of the form $(a_m,b_n)$, $b_n\in V_2$ and $a_m\neq a_1$. Then in any such path there exists a pair of the form $((a_1,b_n),(a_1,b_n)(a_m,v_k))$, $v_k\in V_2$ and the weight of the pair is 
\begin{align*}
    \eta((a_1,b_n),(a_1,b_n)(a_m,v_k))&=\eta_1(a_1,a_1a_m)\wedge\varepsilon_2(b_n)\wedge\varepsilon_2(v_k)\\
    &=\eta_1(a_1,a_1a_m)\\
    &\leq \varepsilon_1(a_1)
\end{align*}
Hence, the strength of each such path is less than or equal to $ \varepsilon_1(a_1)$. Therefore, the pair $((a_1,b_1),(a_1,b_1)(a_2,b_2))$ is strong.
\\
\textbf{Sub case 2}: $\eta((a_1,b_1),(a_1,b_1)(a_1,b_2))=\eta_2(b_1,b_1b_2)$.\\
As sub case 1, consider a path say, $P_1:(a_1,b_1),(a_1,v_1),(a_1,v_2),...,(a_1,v_k),(a_1,b_2)$, $v_1,v_2,...,v_k\in V_2$ that consists of vertices of the form $(a_1,b_n)$, $b_n\in V_2$. Then corresponding to each such path there exists a path $P_2:b_1,v_1,v_2,...,v_k,b_2$ in $\tilde{\mathscr X_2}$. Then $P_2$ along with $\{(b_2,b_1b_2), b_1b_2, (b_1,b_1b_2)\}$ forms a cycle. Since $\tilde{\mathscr X_2}$ is SFIG, there exists a pair of weight $\leq \eta_2(b_1,b_1b_2)$. Hence by (1), the path $P_1\cup\{(a_1,b_1)(a_2,b_2),((a_2,b_2),(a_1,b_1)(a_2,b_2)) \}$ has strength less than or equal to $ \eta_2(b_1,b_1b_2)$.
Now, consider a path that consists of vertices of the form $(a_m,b_n)$, $b_n\in V_2$ and $a_m\neq a_1$. Then in each such path there exists a pair of the form $((a_1,b_n),(a_1,b_n)(a_m,v_k))$, $v_k\in V_2$ and since the maximum of weight of pairs in $\tilde{\mathscr X_1}$ is less than or equal to the minimum of weight of pairs in $\tilde{\mathscr X_2}$,
\begin{align*}
    \eta((a_1,b_n),(a_1,b_n)(a_m,v_k))&=\eta_1(a_1,a_1a_m)\wedge\varepsilon_2(b_n)\wedge\varepsilon_2(v_k)\\
    &=\eta_1(a_1,a_1a_m)\\
    &\leq \eta_2(b_1,b_1b_2).
\end{align*}
Hence every path from $(a_1, b_1) $ to $(a_2,b_2)$ has strength $\leq \eta_2(b_1,b_1b_2)$. Therefore, the pair $((a_1,b_1),(a_1,b_1)(a_2,b_2))$ is strong.
\end{proof}
\begin{thm}\label{39}
Let $\tilde{\mathscr X_1}$ and $\tilde{\mathscr X_2}$ be FIGs such that $\tilde{\mathscr X}=\tilde{\mathscr X_1}[\tilde{\mathscr X_2}]$ is SFIG. Let $D_1$ and $D_2$ be SIDSs of $\tilde{\mathscr X_1}$ and $\tilde{\mathscr X_2}$ respectively. Then $D_1\times D_2$ is a SIDS of $\tilde{\mathscr X}$ and $\gamma_{IS}\leq W(D_1\times D_2)$.
\end{thm}
\begin{proof}
Let $\tilde{\mathscr X_1}$ and $\tilde{\mathscr X_2}$ be FIGs such that $\tilde{\mathscr X}=\tilde{\mathscr X_1}[\tilde{\mathscr X_2}]$ is a SFIG. Let $D_1$ and $D_2$ be SIDS of $\tilde{\mathscr X_1}$ and $\tilde{\mathscr X_2}$ respectively.
Consider $(u,v)\notin D_1\times D_2$. Then there are three cases;\\
\textbf{Case 1}: $u\notin D_1$ and $v\in D_2$.\\
Since $D_1$ is a SIDS of $\tilde{\mathscr X_1}$, there exists a $u'\in D_1$ that dominates $u$. By Definition \ref{35}, $(u',v)(u,v)\in E$. Since $\tilde{\mathscr X}$ is SFIG, $(u',v)$ dominates $(u,v)$. \\
\textbf{Case 2}: $u\in D_1$ and $v\notin D_2$.\\
Since $D_2$ is a SIDS of $\tilde{\mathscr X_2}$, there exists a $v'\in D_2$ that dominates $v$. By Definition \ref{35}, $(u,v)(u,v')\in E$. Since $\tilde{\mathscr X}$ is SFIG, $(u,v')$ dominates $(u,v)$. \\
\textbf{Case 3}: $u\notin D_1$ and $v\notin D_2$.\\
Since $D_1$ is a SIDS of $\tilde{\mathscr X_1}$, there exists a $u'\in D_1$ that dominates $u$. By Definition \ref{35}, $(u',v)(u,v)\in E$. Since $\tilde{\mathscr X}$ is SFIG, $(u',v)$ dominates $(u,v)$. \\
Therefore, $D_1\times D_2$ is a SIDS of $\tilde{\mathscr X}$ and hence, $\gamma_{IS}\leq W(D_1\times D_2)$
\end{proof}
\begin{cor}\label{40}
Let $\tilde{\mathscr X_1}$ and $\tilde{\mathscr X_2}$ be two SFIGs such that the maximum of weight of pairs in $\tilde{\mathscr X_1}$ is less than or equal to the minimum of weight of pairs in $\tilde{\mathscr X_2}$. Then $D_1\times D_2$ is a SIDS of $\tilde{\mathscr X}$ and $\gamma_{IS}\leq W(D_1\times D_2)$, where $D_1$ and $D_2$ are SIDSs of $\tilde{\mathscr X_1}$ and $\tilde{\mathscr X_2}$ respectively.
\end{cor}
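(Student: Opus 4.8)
The plan is to obtain this corollary as an immediate consequence of the two preceding theorems, chaining them in the obvious order. The observation driving the argument is that the hypotheses of the corollary are verbatim the hypotheses of Theorem \ref{38}: we are given two SFIGs $\tilde{\mathscr X_1}$ and $\tilde{\mathscr X_2}$ with the property that the maximum of the weights of pairs in $\tilde{\mathscr X_1}$ does not exceed the minimum of the weights of pairs in $\tilde{\mathscr X_2}$. Therefore I would first invoke Theorem \ref{38} directly to conclude that the composition $\tilde{\mathscr X}=\tilde{\mathscr X_1}[\tilde{\mathscr X_2}]$ is a SFIG.

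Once strength of the composition is in hand, the second step is to feed this into Theorem \ref{39}. The hypothesis of Theorem \ref{39} is precisely that $\tilde{\mathscr X}=\tilde{\mathscr X_1}[\tilde{\mathscr X_2}]$ is a SFIG, together with the existence of SIDSs $D_1$ and $D_2$ of $\tilde{\mathscr X_1}$ and $\tilde{\mathscr X_2}$ respectively; the latter are supplied in the statement of the corollary. Thus Theorem \ref{39} applies without any further verification and yields both that $D_1\times D_2$ is a SIDS of $\tilde{\mathscr X}$ and that $\gamma_{IS}\leq W(D_1\times D_2)$, which is exactly the asserted conclusion.

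Since the corollary is a pure specialisation, there is essentially no obstacle to overcome: the only thing to check is that the strength hypothesis of the corollary really is the strength hypothesis of Theorem \ref{38}, so that the composition is genuinely strong and the domination bound of Theorem \ref{39} becomes available. Accordingly, the proof I would write is a single short paragraph stating that Theorem \ref{38} makes $\tilde{\mathscr X_1}[\tilde{\mathscr X_2}]$ strong and that Theorem \ref{39}, applied to this strong composition with the given SIDSs $D_1$ and $D_2$, delivers the claim.
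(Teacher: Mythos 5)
Your proposal is correct and follows exactly the route the paper intends: Corollary \ref{40} is stated without proof precisely because it is the immediate chaining of Theorem \ref{38} (the composition $\tilde{\mathscr X_1}[\tilde{\mathscr X_2}]$ is a SFIG under the weight hypothesis) into Theorem \ref{39} (which then yields that $D_1\times D_2$ is a SIDS with $\gamma_{IS}\leq W(D_1\times D_2)$). Nothing further needs checking.
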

\begin{prop}\label{41}
Let $\tilde{\mathscr X_1}$ and $\tilde{\mathscr X_2}$ be two CFIGs, then the composition $\tilde{\mathscr X}=\tilde{\mathscr X_1}[\tilde{\mathscr X_2}]$ is a CFIG.
\end{prop}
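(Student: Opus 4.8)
The plan is to verify directly that $\tilde{\mathscr X}=\tilde{\mathscr X_1}[\tilde{\mathscr X_2}]$ satisfies the two defining conditions of a CFIG, namely $\rho((a_1,b_1)(a_2,b_2))=\varepsilon(a_1,b_1)\wedge\varepsilon(a_2,b_2)$ for every pair of vertices and $\eta((a_1,b_1),(a_1,b_1)(a_2,b_2))=\varepsilon(a_1,b_1)\wedge\rho((a_1,b_1)(a_2,b_2))$ for every incidence pair. The only inputs are the two CFIG identities for each factor, $\rho_i(xy)=\varepsilon_i(x)\wedge\varepsilon_i(y)$ and $\eta_i(x,xy)=\varepsilon_i(x)\wedge\rho_i(xy)$ for $i=1,2$, together with $\varepsilon(a_1,b_1)=\varepsilon_1(a_1)\wedge\varepsilon_2(b_1)$ from Definition \ref{35}.

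First I would check that the quantifiers in the CFIG conditions are actually exhausted, i.e., that the underlying graph of $\tilde{\mathscr X}$ is complete and that both incidence pairs of every edge lie in $I$. Since $\tilde{\mathscr X_1}$ and $\tilde{\mathscr X_2}$ are complete, $E_1$ and $E_2$ contain every edge joining two distinct vertices; hence by Definition \ref{35} any two distinct vertices $(a_1,b_1)$ and $(a_2,b_2)$ are adjacent---through the case $a_1=a_2,\ b_1b_2\in E_2$ when they share their first coordinate, and through the case $a_1a_2\in E_1$ otherwise. Completeness of each factor likewise guarantees $(b_1,b_1b_2),(b_2,b_1b_2)\in I_2$ and $(a_1,a_1a_2),(a_2,a_1a_2)\in I_1$, so every edge of $\tilde{\mathscr X}$ carries both of its incidence pairs.

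Next I would verify the $\rho$ identity across the three branches of Definition \ref{35}. For example, in the branch $a_1a_2\in E_1,\ b_1\neq b_2$ the composition gives $\rho((a_1,b_1)(a_2,b_2))=\rho_1(a_1a_2)\wedge\varepsilon_2(b_1)\wedge\varepsilon_2(b_2)$, and substituting $\rho_1(a_1a_2)=\varepsilon_1(a_1)\wedge\varepsilon_1(a_2)$ yields exactly $\varepsilon_1(a_1)\wedge\varepsilon_2(b_1)\wedge\varepsilon_1(a_2)\wedge\varepsilon_2(b_2)=\varepsilon(a_1,b_1)\wedge\varepsilon(a_2,b_2)$. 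The branches $a_1=a_2,\ b_1b_2\in E_2$ and $b_1=b_2,\ a_1a_2\in E_1$ are handled the same way, using $\rho_2(b_1b_2)=\varepsilon_2(b_1)\wedge\varepsilon_2(b_2)$ and $\rho_1(a_1a_2)=\varepsilon_1(a_1)\wedge\varepsilon_1(a_2)$ respectively. With the $\rho$ identity in hand, I would then verify the $\eta$ identity branch by branch, each time rewriting $\eta_i$ as $\varepsilon_i\wedge\rho_i$ and matching the result against $\varepsilon(a_1,b_1)$ intersected with the value of $\rho$ computed in the previous step.

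The computations are routine minimum manipulations; the only point demanding care is keeping the three-way case split aligned between the formulas for $\rho$ and $\eta$, and confirming at the outset that the underlying graph is complete, since the CFIG conditions are universally quantified over all vertex pairs and all incidence pairs.
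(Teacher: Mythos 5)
Your proposal is correct and follows essentially the same route as the paper's proof: a case split showing the underlying graph of $\tilde{\mathscr X_1}[\tilde{\mathscr X_2}]$ is complete, followed by branch-by-branch substitution of the factor identities $\rho_i(xy)=\varepsilon_i(x)\wedge\varepsilon_i(y)$ and $\eta_i(x,xy)=\varepsilon_i(x)\wedge\rho_i(xy)$ into the three cases of Definition \ref{35} to verify both CFIG conditions. Your additional explicit check that every edge carries both of its incidence pairs in $I$ is a point the paper leaves implicit, but it does not change the argument.
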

\begin{proof}
Consider $\tilde{\mathscr X}=\tilde{\mathscr X_1}[\tilde{\mathscr X_2}]$, where $\tilde{\mathscr X_1}$ and $\tilde{\mathscr X_2}$ are two CFIGs. 
Consider any two distinct vertices $(a,u)$ and $(b,v)$ in $\tilde{\mathscr X}$. There are two cases;\\
\textbf{Case 1}: $a=b$,
Then $u\neq v$. Since $\tilde{\mathscr X_2}$ is CFIG, it implies that $uv\in E_2$. Therefore by Definition \ref{35}, $(a,u)(b,v)\in E$.\\
\textbf{Case 2}: $a\neq b$,
In this case, since $\tilde{\mathscr X_1}$ is CFIG, $ab\in E_1$. Therefore by Definition \ref{35}, $(a,u)(b,v)\in E$. \\
Hence by Case 1 and 2, the underlying graph of $\tilde{\mathscr X}$ is complete. 
Now, consider the edge $(a,u)(b,v)$ in $\tilde{\mathscr X}$.
Then weight of $(a,u)(b,v)$ is;
{\scriptsize
\begin{align*}
\rho((a,u)(b,v))&=
\begin{cases}
\varepsilon_1(a)\wedge\rho_2(uv) \quad & if \quad a=b\\
\rho_1(ab)\wedge\varepsilon_2(u) \quad & if \quad a\neq b \quad and \quad  u=v\\
\rho_1(ab)\wedge\varepsilon_2(u)\wedge\varepsilon_2(v) \quad & if \quad a\neq b\quad and \quad  u\neq v
\end{cases}\\
&=
\begin{cases}
\varepsilon_1(a)\wedge\varepsilon_2(u)\wedge\varepsilon_2(v) \quad & if \quad a=b\\
\varepsilon_1(a)\wedge\varepsilon_1(b)\wedge\varepsilon_2(u) \quad & if \quad a\neq b \quad and \quad  u=v\\
\varepsilon_1(a)\wedge\varepsilon_1(b)\wedge\varepsilon_2(u)\wedge\varepsilon_2(v) \quad & if \quad a\neq b\quad and \quad  u\neq v
\end{cases}\\
&= \varepsilon(a,u)\wedge\varepsilon(b,v)
\end{align*}
}
Similarly, by Definition \ref{35} the weight of the pair $((a,u),(a,u)(b,v))$ is;
{\scriptsize
\begin{align*}
\eta((a,u),(a,u)(b,v))&=
\begin{cases}
\varepsilon_1(a)\wedge\eta_2(u,uv) \quad & if \quad a=b\\
\eta_1(a,ab)\wedge\varepsilon_2(u) \quad & if \quad a\neq b \quad and \quad  u=v\\
\eta_1(a,ab)\wedge\varepsilon_2(u)\wedge\varepsilon_2(v) \quad & if \quad a\neq b\quad and \quad  u\neq v
\end{cases}\\
&=
\begin{cases}
\varepsilon_1(a)\wedge\varepsilon_2(u)\wedge\rho_2(uv) \quad & if \quad a=b\\
\varepsilon_1(a)\wedge\rho_1(ab)\wedge\varepsilon_2(u) \quad & if \quad a\neq b \quad and \\& \qquad  u=v\\
\varepsilon_1(a)\wedge\rho_1(ab)\wedge\varepsilon_2(u)\wedge\varepsilon_2(v) \quad & if \quad a\neq b\quad and \\ &\qquad  u\neq v
\end{cases}\\
&= \varepsilon(a,u)\wedge\rho((a,u)(b,v))
\end{align*}
}
By the same way, weight of $((b,v),(a,u)(b,v))$ is $\eta((b,v),(a,u)(b,v))=\varepsilon(b,v)\wedge\rho((b,v)(a,u))$.
Hence, $\tilde{\mathscr X}$ is a CFIG.
\end{proof}

\section{Application}
In the research world it is very common for research groups to collaborate and discuss research interests within the groups. Here, as an application of tensor product, a collaboration graph is considered.\\
Consider two research groups. Each group is represented by a FIG with vertices representing the people in the group. An edge joins two vertices $a$ and $b$ if person $a$ is involved in a research discussion with person $b$. Each vertex is assigned with a weight 1, representing their involvement in the discussion. The edge $ab$ represent the discussion between $a$ and $b$ and edge weight $\rho(ab)$ is the total discussions between person $a$ and $b$. The pair weight represents their individual contribution to the discussion i.e. a pair $(a,ab)$ is the contribution of $a$ to the discussion between $a$ and $b$ . Now, suppose that the two groups wish to collaborate. Now a new group is formed with vertex representing a team of two people. Vertex in the new group is of the form $(a,u)$, where person $a$ is from the first group and person $u$ is from the second group. Vertices $(a,u)$ and $(b,v)$ are adjacent if $a$ and $b$ are engaged in a discussion in the first group, and $u$ and $v$ are engaged in a discussion in the second group. Now consider the tensor product of the two graphs. The edge weight is the minimum discussion between the two teams, and the pair weight is the minimum contribution of each team to the discussion.\\  
Now, consider Fig. 6. FIGs $\tilde{\mathscr{X}_1}$ and $\tilde{\mathscr{X}_2}$ represents two research groups. Consider the FIG, $\tilde{\mathscr{X}}$ with collaboration graph as underlying graph. Each vertex represent two people, one from each group. Discussion happens between two teams $(a,u)$ and $(b,v)$ of the group if $a$ and $b$ are involved in a discussion in the first group and, $u$ and $v$ are involved in a discussion in the second group. It can be observed that the FIG $\tilde{\mathscr{X}}$ is the tensor product of FIGs $\tilde{\mathscr{X}_1}$ and $\tilde{\mathscr{X}_2}$. Here edge weight $\rho((a,u)(b,v))$ is the minimum of discussion between $a$ and $b$, and $u$ and $v$. Similarly, the pair weight $\eta((a,u),(a,u)(b,v))$ is the minimum of individual contribution of $a$ and $u$ to the discussion $ab$ and $uv$ respectively.  
\begin{figure}[H]
    \centering
    \includegraphics[width=8cm,height=10cm]{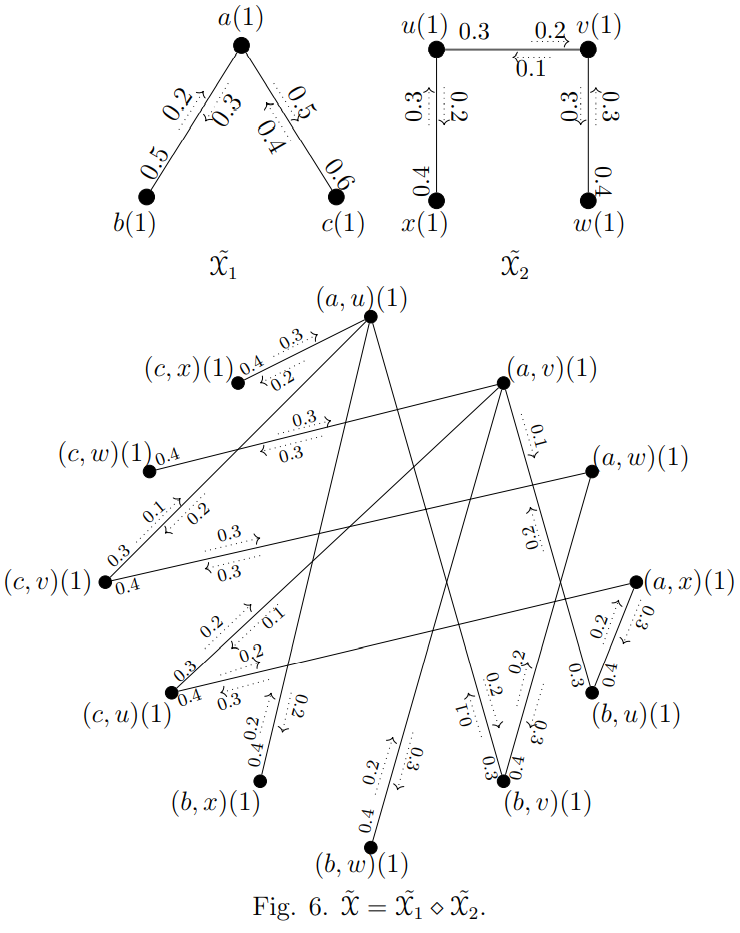}
\end{figure}
In FIG, $\tilde{\mathscr{X}}$ the concept of SID can also be considered. A minimum SIDS of  $\tilde{\mathscr{X}}$ is $\{(a,u), (a,v), (b,v), (c,u) \}$ which means 4 teams are required for the discussion to happen in the entire group. And SIDN is 0.6, which is the minimum individual contribution of the 4 teams in the SIDS to the discussion.

\section{Conclusion}
The introduction of graph operations motivated the researchers to study the properties of the new graphs obtained from the known structures. Domination is an area of graph theory of growing importance and is extensively studied by researchers. Hence it is significant to combine the ideas to study SID in the operations on FIGs. The article deals with the Cartesian product, join, tensor product, and composition of FIGs and some of the basic properties of the obtained graphs. The study mainly focus on SFIGs and FIGs with effective pairs. SID is also discussed in the FIGs obtained from the operations. Bounds for the SIDN of FIG are also obtained for each operation.
\\\\\\
\textbf{\large{Acknowledgement}}\\\\
The first author gratefully acknowledges the financial support of Council of Science and Industrial Research (CSIR), Government of India.\\
The authors would like to thank the DST, Government of India, for providing support to carry out this work under the scheme 'FIST' (No.SR/FST/MS-I/2019/40).

\end{document}